\documentclass[11pt,a4paper]{scrartcl}

\PassOptionsToPackage{mathfont=stix2}{mystyle-koma-enhanced}
\usepackage{mystyle-koma-enhanced}
\KOMAoptions{DIV=8}
\recalctypearea

\declaretheorem[style=mystyleplain,name=Theorem A,numbered=no]{mainthm}
\AddToHook{env/mainthm/begin}{\crefalias{section}{mainthm}}

\newcommand{\R}{\mathbb R}
\newcommand{\N}{\mathbb N}
\newcommand{\PP}{\mathbb P}
\newcommand{\EE}{\mathbb E}
\newcommand{\cC}{\mathcal C}
\newcommand{\cX}{\mathcal X}
\newcommand{\cN}{\mathcal N}
\newcommand{\whG}{\widehat G}
\newcommand{\Gp}{{}^pG}
\newcommand{\pwhG}{{}^p\widehat G}
\newcommand{\dd}{\,\mathrm d}
\newcommand{\rv}{\mathrm{RV}}
\newcommand{\1}{\mathbf 1}

\title{Spatial preferential attachment models: robustness revisited}
\author{
Peter Gracar\,\orcidlink{0000-0001-8340-8340}\thanks{University of Leeds,
Leeds, United Kingdom. Email:
\href{mailto:p.gracar@leeds.ac.uk}{\nolinkurl{p.gracar@leeds.ac.uk}}.}
\and
Christian M\"onch\,\orcidlink{0000-0002-6531-6482}\thanks{Independent
Researcher, Darmstadt, Germany. Email:
\href{mailto:cmoench25@gmail.com}{\nolinkurl{cmoench25@gmail.com}}.}
}
\date{}

\MSC{60K35, 05C80, 60G55, 82C43}
\Keywords{spatial preferential attachment, degree-based, robustness, percolation,
local weak convergence, scale-free networks}

\begin{document}
\maketitle

\begin{abstract}

We investigate robustness under percolation
in the degree-driven spatial preferential attachment model. Consider a profile
that is regularly varying with exponent \(-\delta\), where \(\delta>1\), and
an attachment rule with asymptotic slope \(\gamma\in(0,1)\). Jacob and
M\"orters (Ann.\ Probab.\ 45 (2017)) proved robustness when
\[
  \delta\left(\frac1\gamma-1\right)<1.
\]
We prove the converse in the strict regime: if
\[
  \delta\left(\frac1\gamma-1\right)>1,
\]
then sufficiently strong Bernoulli vertex thinning destroys the giant
component.
In particular, the strict robustness boundary coincides with that of the age-dependent
random connection model.

\end{abstract}

\section{Introduction}

Spatial preferential attachment provides a setting in which power-law degree
distributions coexist with geometric clustering. In the model introduced by
Jacob and M\"orters~\cite{JacobMorters2015}, vertices are born in space-time,
and a new vertex links to older vertices with probabilities depending on
distance and on the current degree of the older endpoint. An asymptotically
linear attachment rule with slope \(\gamma\in(0,1)\) produces a degree
distribution with power-law exponent \(\tau=1+1/\gamma\), while the spatial
profile controls the probability of long edges. The positive limiting average
clustering coefficient distinguishes these graphs from the locally tree-like
models for which branching-process approximations have been very effective.

A central question is robustness, that is, whether a giant component survives
independent vertex thinning for every positive retention probability. Jacob
and M\"orters~\cite[Theorem~1]{JacobMorters2017} showed that, for a profile
that is regularly varying with exponent \(-\delta\), where \(\delta>1\),
robustness holds when
\[
  \delta\left(\frac1\gamma-1\right)<1,
\]
and proved nonrobustness for \(\gamma<1/2\) in all dimensions and for
\(\gamma<(\delta-1)/\delta\) in dimension one. In
\cite[Remark~2]{JacobMorters2017} they conjectured nonrobustness throughout
the complementary strict regime \(\delta(1/\gamma-1)>1\). This regime includes
graphs with infinite degree variance, in which the geometry must prevent the
high-degree vertices from sustaining a robust giant. We prove this conjecture
in every dimension.

Subsequent work on the same degree-driven model established further global
and local properties. Hirsch and the second author~\cite{HirschMoench2020} proved that
typical distances in the robust giant are at most doubly logarithmic in the
number of vertices. The constant in their upper bound depends on the spatial
decay as well as on the attachment strength. They also obtained a large
deviation principle for the empirical neighbourhood structure.

The difficulty caused by the degree dependence also motivated the
age-dependent random connection model of the first author, Grauer,
L\"uchtrath and M\"orters~\cite{GracarGrauerLuechtrathMorters2019}. There the
random
attractiveness of a vertex born at time \(s\) is replaced, at a later time
\(t\), by a deterministic multiple of \((t/s)^\gamma\), the characteristic
scale of its degree growth. This preserves power-law degrees and geometric
clustering, while making the edges independent, conditionally on all vertex
locations and birth times. The resulting model retains substantial spatial
correlations after averaging over this environment, but removes the feedback
between an edge and subsequent attachment probabilities.

The age-dependent model belongs to the class of weight-dependent random
connection models, which also contains Boolean models, long-range percolation
and other spatial scale-free graphs; see
\cite{GracarLuechtrathMorters2021,GracarHeydenreichMoenchMorters2022}.
For the age-based specialisation, the first author, L\"uchtrath and
M\"orters~\cite{GracarLuechtrathMorters2021} identified the strict boundary
\(\delta(1/\gamma-1)=1\) between zero and positive critical bond-percolation
parameters. Their nonrobustness proof uses a geometric skeleton decomposition
to retain information lost by ordinary path counting. In that model,
conditional independence simplifies the probabilistic estimates. It does not
remove the geometric obstruction that distinguishes these models from their
nonspatial counterparts.

Later work established sharp chemical-distance asymptotics for the
age-dependent model and related geometric graphs
\cite{GracarGrauerMorters2022}, and recurrence and transience results for
random walks on weight-dependent networks
\cite{GracarHeydenreichMoenchMorters2022}. Jacob, Jahnel and
L\"uchtrath~\cite{JacobJahnelLuechtrath2026} give annulus-crossing criteria
which also apply to spatial graphs whose connections may depend on the
surrounding vertex configuration.

It is not automatic that the degree-driven and age-dependent models have the
same robustness boundary. Even without space, degree reinforcement can favour
the emergence of a giant component, compared with an age-based model with
independent edges and the same asymptotic single-edge probabilities. For
example, for the affine attachment rule \(f(k)=\gamma k+\beta\), with
\(0<\gamma<1/2\), the critical values of \(\beta\) for the existence of a
giant satisfy
\[
  \beta_{\mathrm c}^{\mathrm{degree}}
  =\frac{(1/2-\gamma)^2}{1-\gamma}
  <\frac{1-2\gamma}{4}
  =\beta_{\mathrm c}^{\mathrm{age}}.
\]
The first value is due to
Dereich and M\"orters~\cite[Proposition~1.3]{DereichMorters2013}; the second
is the threshold for the independent-edge graph with kernel
\(\beta(s\wedge t)^{-\gamma}(s\vee t)^{\gamma-1}\), recalled in
\cite[Section~1]{MortersSchleicher2026}. Thus the degree-driven model can
have a giant when its age-based counterpart does not, even though their
degree exponents and asymptotic single-edge probabilities agree.

Near-critical behaviour also depends on more than the degree exponent.
Eckhoff, M\"orters and
Ortgiese~\cite[Theorem~1.1]{EckhoffMortersOrtgiese2018} consider their
nonspatial preferential attachment model with \(\tau>3\) and a giant before
percolation. They show that, as the bond-retention parameter \(p\) decreases
to \(p_{\mathrm c}>0\), the giant-component density has an essential
singularity of the form \(\exp\{-(c+o(1))/\sqrt{p-p_{\mathrm c}}\}\), with
\(c>0\).
Their comparison with the polynomial
decay in rank-one models shows that the degree exponent alone does not
determine critical behaviour. The additional reinforcement in the spatial
model therefore left open the possibility of a robustness boundary below
the age-based value.

The smaller value \((\delta-1)/\delta\) in the one-dimensional bound of
Jacob and M\"orters also has a geometric meaning. For power-law
profiles with \(\delta>2\), the effective decay exponent of the age-dependent
kernel is \(\delta_{\mathrm{eff}}=2\) when
\(\gamma\le(\delta-1)/\delta\), and
\(\delta_{\mathrm{eff}}=1+\delta(1-\gamma)<2\) above this line
\cite[Proposition~1.3]{GracarLuechtrathMoench2025}. This exponent measures
the decay of connections between large vertex sets. The weak decay regime
\(\delta_{\mathrm{eff}}<2\) therefore corresponds to
\(\gamma>(\delta-1)/\delta\). Its consequences for cluster structure,
continuity of percolation and transience are studied in \cite{Moench2024}.
In related models, including the one-dimensional soft Boolean model, the
same line separates regimes with and without a supercritical phase
\cite[Theorem~1.4]{GracarLuechtrathMoench2025}. These questions differ from
robustness, but they show that the line \(\gamma=(\delta-1)/\delta\) is a
natural competing scale. In particular, it was open whether degree
reinforcement sustains robustness when \(\gamma>(\delta-1)/\delta\) and
\(\delta(1/\gamma-1)>1\).

Our result shows that the degree-driven model has the same strict robustness
boundary as the age-dependent model: reinforcement does not enlarge the
robust regime into the intervening parameter range. In particular,
sufficiently strong vertex thinning destroys the giant whenever
\[
  \delta\left(\frac1\gamma-1\right)>1,
  \qquad\text{equivalently,}\qquad
  \delta(\tau-2)>1.
\]
Together with \cite{JacobMorters2017}, this completes the phase diagram away
from the critical line; see \Cref{fig:phase-diagram}.

\begin{figure}[t]
\centering
\definecolor{phaseknown}{HTML}{E6E9ED}
\definecolor{phasenew}{HTML}{F2B544}
\definecolor{phaserobust}{HTML}{CBE3EF}
\begin{tikzpicture}[
  color=black,
  x=1cm,
  y=1cm,
  font=\small,
  axis/.style={->,semithick},
  critical line/.style={thick,densely dashed},
  previous boundary/.style={semithick,draw=black!65}
]
  \foreach \panel/\offset/\paneltitle in
    {1/0/{(a) $d=1$},2/6.1/{(b) $d\ge2$}} {
    \begin{scope}[shift={(\offset,0)}]
      \node[font=\small\bfseries] at (2.5,4.1) {\paneltitle};

      \fill[phasenew] (0,0) rectangle (5,3.6);
      \fill[phaserobust]
        (2.5,0) -- (5,0) -- (5,3.6) --
        plot[domain=6:1,samples=120,variable=\deltaval]
          ({5*\deltaval/(\deltaval+1)},{0.72*(\deltaval-1)}) -- cycle;

      \fill[phaseknown] (0,0) rectangle (2.5,3.6);
      \ifnum\panel=1
        \fill[phaseknown]
          (2.5,0.72) --
          plot[domain=2:6,samples=120,variable=\deltaval]
            ({5*(\deltaval-1)/\deltaval},{0.72*(\deltaval-1)}) --
          (2.5,3.6) -- cycle;
        \draw[previous boundary]
          (2.5,0) -- (2.5,0.72) --
          plot[domain=2:6,samples=120,variable=\deltaval]
            ({5*(\deltaval-1)/\deltaval},{0.72*(\deltaval-1)});
        \node[font=\footnotesize,anchor=east] (oldcurve) at (3.1,2.8)
          {$\gamma=(\delta-1)/\delta$};
        \draw[thin,black!65] (oldcurve.east) -- (3.5,2.8) -- (3.888889,2.52);
      \else
        \draw[previous boundary] (2.5,0) -- (2.5,3.6);
      \fi

      \draw[critical line]
        plot[domain=1:6,samples=120,variable=\deltaval]
          ({5*\deltaval/(\deltaval+1)},{0.72*(\deltaval-1)});
      \draw[axis] (0,0) -- (5.3,0) node[right] {$\gamma$};
      \draw[axis] (0,0) -- (0,3.85) node[above] {$\delta$};
      \draw[thin,black!35] (0,3.6) -- (5,3.6) -- (5,0);

      \foreach \x/\lab in {0/0,2.5/{1/2},3.75/{3/4},5/1} {
        \draw (\x,0) -- ++(0,-0.06) node[below] {$\lab$};
      }
      \foreach \y/\lab in {0/1,0.72/2,2.16/4,3.6/6} {
        \draw (0,\y) -- ++(-0.06,0) node[left] {$\lab$};
      }
    \end{scope}
  }

  \filldraw[fill=phaseknown,draw=black!35] (0,-0.75) rectangle (0.35,-0.95);
  \node[anchor=west] at (0.47,-0.85) {Previously known nonrobustness};
  \filldraw[fill=phaserobust,draw=black!35] (6.1,-0.75) rectangle (6.45,-0.95);
  \node[anchor=west] at (6.57,-0.85) {Previously known robustness};
  \filldraw[fill=phasenew,draw=black!35] (0,-1.2) rectangle (0.35,-1.4);
  \node[anchor=west,font=\small\bfseries] at (0.47,-1.3)
    {New nonrobustness (this paper)};
  \draw[critical line] (6.1,-1.3) -- (6.45,-1.3);
  \node[anchor=west] at (6.57,-1.3)
    {$\delta(1/\gamma-1)=1$: unresolved};
\end{tikzpicture}
\caption{The strict robustness phase diagram in the
\((\gamma,\delta)\)-plane, for \(\gamma\in(0,1)\) and \(\delta>1\).
Grey and blue indicate the nonrobust and robust regions established by
Jacob and M\"orters~\cite[Theorem~1]{JacobMorters2017}. Orange marks
the additional nonrobust region obtained in this paper, where
\(\delta(1/\gamma-1)>1\) and \(\gamma\) lies above the previous bounds
\(\max\{1/2,(\delta-1)/\delta\}\) in dimension one and \(1/2\) in higher
dimensions. The earlier nonrobustness bounds are strict; their boundary
points below the dashed critical line are also covered by our result.
The dashed line remains unresolved, and the vertical range is truncated for display.}
\label{fig:phase-diagram}
\end{figure}

The main technical issue is to control path probabilities while preserving
the spatial restrictions needed for nonrobustness. In the degree-driven
model, each new edge changes the future attractiveness of its older endpoint,
so successive attachments to that endpoint remain dependent even when all
vertex locations and birth times are fixed. Averaging over the Poisson
environment further couples the degree evolutions of different vertices,
since the same younger vertex may attach to several of them. One-edge
estimates on the scale
\((t/s)^\gamma\) do not by themselves justify the product bounds for long
paths that are directly available in the age-dependent model. Moreover,
integrating out the spatial variables too early loses the geometric
information needed to go beyond \(\gamma<1/2\).

To control these dependencies, we use the quick-path and Poisson-BK
decomposition of Jacob and M\"orters~\cite{JacobMorters2017}, together with
the skeleton contraction of \cite{GracarLuechtrathMorters2021}.
After domination by an affine attachment rule, degree-moment estimates with a
bounded number of forced arrivals control each short piece of a path. The
decomposition gives a bound that is exponential in the path length, with a
fixed constant per edge. This gives the static age-distance kernel bounds
needed for the geometric argument, without assuming independent edges.

To make the two decompositions compatible, we add deterministic short edges
after constructing the graph. An infinite retained component in this
augmented graph contains a geodesic ray. Quickening this ray and inserting
oldest common children produces a retained backbone with the required
separation between nonconsecutive vertices. We first integrate out the
inserted children and then contract local birth-time maxima of the retained
backbone. The remaining skeleton has birth times decreasing to one oldest
vertex and then increasing. A truncated first moment over these skeletons
is summable for sufficiently small retention probability. Finally, absence
of percolation in the infinite Palm limit rules out a giant in finite volume,
using the local convergence framework of
\cite{JacobMorters2015,JacobMorters2017}.

The structure of this paper is as follows. In \Cref{sec:model} we define the
model and state our main result. In \Cref{sec:spatial} we collect the spatial
integrals. In \Cref{sec:dynamic} we prove the degree-moment estimates and the
path estimate for augmented quick paths. In \Cref{sec:backbone} we construct
the separated backbone and reduce it to a skeleton. In \Cref{sec:skeleton} we
bound the first moment of skeletons and show that the infinite graph does not
percolate for small retention probability. In \Cref{sec:general} we extend
this to asymptotically linear attachment rules, and in \Cref{sec:finite} we
transfer it to finite graphs and complete the proof of Theorem~A.

\section{Model and result}\label{sec:model}

Fix \(d\ge1\). We write vertices as
\[
  \mathbf x=(x,s), \qquad x\in\R^d,\quad s\in(0,1],
\]
where smaller birth time means older. The infinite-volume vertex set
\(\cX\) is a unit-intensity Poisson process on \(\R^d\times(0,1]\).
For each ordered pair consisting of an older vertex
\(\mathbf x=(x,s)\) and a younger vertex \(\mathbf y=(y,t)\), \(s<t\),
let \(V_{\mathbf x,\mathbf y}\) be an independent uniform mark. The edge
\(\{\mathbf x,\mathbf y\}\) is present if and only if
\begin{equation}\label{eq:edge-rule}
  V_{\mathbf x,\mathbf y}
  \le
  \varphi\left(
    \frac{t\|x-y\|^d}{f(Z_{\mathbf x}(t-))}
  \right),
\end{equation}
where \(Z_{\mathbf x}(u)\) is the number of children of \(\mathbf x\) born
up to time \(u\). Edges are undirected after they are formed.

The profile \(\varphi:[0,\infty)\to(0,1)\) is nonincreasing and normalised by
\begin{equation}\label{eq:profile-normalization}
  \int_{\R^d}\varphi(\|z\|^d)\dd z=1.
\end{equation}
This is the standard \(d\)-dimensional convention. If \(v_d\) denotes the
volume of the Euclidean unit ball in \(\R^d\), then
\[
  \int_{\R^d}\varphi(\|z\|^d)\dd z
  =v_d\int_0^\infty\varphi(r)\dd r.
\]
Thus the assumption \(\varphi\in\rv_{-\delta}\) with \(\delta>1\) guarantees
the integrability required for this
normalisation.

The finite graph \(G^t\) is defined on the \(d\)-dimensional torus of volume
\(t\), with the same space-time construction and torus distance. We write
\(\Gp^t\) for the graph induced by independently retained vertices, each kept
with probability \(p\). Let \(C_1(\Gp^t)\) be a largest connected component,
with ties broken arbitrarily, and let \(C_2(\Gp^t)\) be a second-largest
component.

We say that the graph sequence is robust if, for every \(p\in(0,1]\), the
percolated sequence \((\Gp^t)_{t>0}\) has a giant component, meaning that
\(|C_1(\Gp^t)|/t\) stays bounded away from zero in probability. It is nonrobust if this fails for at least one \(p\in(0,1]\). In the
nonrobust phase, Theorem~A(ii) below in fact establishes the stronger
statement that, for all sufficiently small \(p\), the largest percolated
component is \(o(t)\) in
probability.

\begin{mainthm}\label{thm:main}
Assume that \(d\ge1\), that \(f:\N_0\to(0,\infty)\) is nondecreasing and
\[
  \frac{f(k)}{k}\longrightarrow\gamma\in(0,1),
\]
and that \(\varphi:[0,\infty)\to(0,1)\) is nonincreasing, satisfies
\eqref{eq:profile-normalization}, and belongs to \(\rv_{-\delta}\) for some
\(\delta>1\). Away from the critical line
\(\delta(1/\gamma-1)=1\), the strict robustness phase diagram is as follows.
\begin{enumerate}[label=(\roman*)]
\item If \(\delta(1/\gamma-1)<1\), then the sequence is
  robust. More
  precisely, for every \(p\in(0,1]\), there is
  \({}^p\theta>0\) such that
  \[
    |C_1(\Gp^t)|=({}^p\theta+o_{\PP}(1))t,
    \qquad
    |C_2(\Gp^t)|=o_{\PP}(t),
  \]
  where \(C_2\) is a second-largest component.
\item If \(\delta(1/\gamma-1)>1\), then the sequence is nonrobust. More
  precisely, there exists \(p_*>0\) such that, for every \(0<p<p_*\),
  \[
    \frac{|C_1(\Gp^t)|}{t}
    \xrightarrow[t\to\infty]{\PP}0 .
  \]
\end{enumerate}
\end{mainthm}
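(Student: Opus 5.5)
Part (i) is \cite[Theorem~1]{JacobMorters2017} combined with the local-convergence and uniqueness results of \cite{JacobMorters2015}, so the work is entirely in part (ii). The plan is to first prove that the infinite Palm graph, obtained by adding a root vertex \((0,U)\) with \(U\) uniform to \(\cX\), has no infinite retained cluster containing the root for small \(p\). Then we transfer this to finite volume.

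\textbf{Step 1: reduction to an affine rule.} Since \(f(k)/k\to\gamma\) and \(f\) is nondecreasing, for any \(\gamma'>\gamma\) with \(\delta(1/\gamma'-1)>1\) there is \(\beta\) with \(f(k)\le\gamma' k+\beta\). A monotone coupling in the uniform marks \(V\) makes the graph for \(f\) a subgraph of the graph for the affine rule. It therefore suffices to treat \(f(k)=\gamma k+\beta\), with the strict inequality preserved.

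\textbf{Step 2: path estimates.} For the affine rule I would compute moments of \(Z_{\mathbf x}(t)\), conditioned on a bounded number of prescribed arrivals. Rescaled by \((t/s)^\gamma\), these are bounded by \(C^k\). Combining this with the Jacob--M\"orters quick-path and Poisson-BK decomposition, the probability that a prescribed quick path \(\mathbf x_0,\dots,\mathbf x_n\) is present is bounded by \(C^n\prod_i\varphi\bigl(c\,(s_i\wedge s_{i+1})^{\gamma}(s_i\vee s_{i+1})^{1-\gamma}\|x_i-x_{i+1}\|^d\bigr)\)-type age-dependent kernels. After adding deterministic short edges, this estimate is what replaces conditional independence.

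\textbf{Step 3: skeleton first moment.} An infinite retained cluster in the augmented graph contains a geodesic ray. I would quicken it and insert oldest common children to obtain a retained backbone whose nonconsecutive vertices are spatially separated. Integrating out the children and contracting local birth-time maxima, as in \cite{GracarLuechtrathMorters2021}, yields skeletons whose birth times decrease to a unique oldest vertex and then increase. The expected number of skeletons of length \(n\) reaching distance \(R\) is then bounded by \((Cp)^{n}\) times spatial integrals. These integrals converge exactly when \(\delta(1/\gamma-1)>1\), using the regular variation of \(\varphi\); here one computes that the integral of \(s^{-\gamma\delta}\) against the geometric weights is finite. Summing over \(n\), with \(p<p_*=1/C\), and truncating, the probability that the root reaches distance \(R\) tends to \(0\).

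\textbf{Step 4: finite volume.} By local convergence of \(\Gp^t\) to the Palm limit \cite{JacobMorters2015,JacobMorters2017}, the fraction of vertices in components of size at least \(K\) converges to \(\PP(|C(0)|\ge K)\). This tends to \(0\) as \(K\to\infty\), so \(|C_1|/t\to0\). The main obstacle is Step~2: obtaining a per-edge constant uniformly along long paths despite degree feedback and shared children, while keeping the spatial variables intact for Step~3.
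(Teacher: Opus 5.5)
Your Steps~1, 2 and~4 and your backbone construction follow the paper's route. Step~3, however, has a genuine gap, exactly where the new regime differs from the one Jacob and M\"orters already covered.

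After the two-edge contractions, the skeleton density is $J_{d,\eta}^m\prod_j g(t_{j-1},t_j)^{-1}$. This does not involve $\delta$ at all, and its birth-time integral does \emph{not} converge in general. If the oldest skeleton vertex is interior, the product contains $t_k^{-2\gamma}$. Already for $m=2$ (root, old hub, young vertex) the density is $t_0^{\gamma-1}t_2^{\gamma-1}t_1^{-2\gamma}$, whose integral over $t_1\in(0,t_0\wedge t_2)$ diverges for every $\gamma\ge1/2$. This is the infinite second moment of hub degrees. The separation constraints do not cure it. Neither does requiring the path to reach distance $R$, since a sufficiently old hub has most of its neighbours beyond any fixed distance. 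The whole new region lies in $\gamma>1/2$, so your assertion that the integrals ``converge exactly when $\delta(1/\gamma-1)>1$'' fails there, and the word ``truncating'' carries the entire argument without a mechanism.

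What is missing is a truncation compatible with an infinite cluster that may visit ever older vertices. The paper calls a length-$n$ witness regular if its oldest birth time is at least $2^{-n}$. This gives $T_m^{\mathrm{reg}}(n)\le C_\gamma D_\gamma^n$, which $p^n$ absorbs. All other cases are handled by prefixes whose \emph{endpoint} is the oldest vertex. For these the skeleton is monotone and the time integral is finite, $T_{m,m}(\infty)=(1-\gamma)^{-m}$. The paper then uses a deterministic dichotomy: either $\min_{i\le n}t_i\ge2^{-n}$ for infinitely many $n$, or the prefixes ending at successive record-oldest vertices are endpoint-old infinitely often. Borel--Cantelli excludes both alternatives, hence an infinite backbone. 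Your proposal has no counterpart of this endpoint-old, monotone case, and without it nothing controls backbones through arbitrarily old hubs.

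You also place the condition $\delta(1/\gamma-1)>1$ in the wrong step. It enters in the two-edge contraction, \Cref{lem:static-contraction}. Integrating out a young connector $(z,u)$, $u\in[t,1]$, between $(x,t)$ and an older $(y,s)$ with $q>1$ gives a bound of order $\|x-y\|^{-d\eta}s^{-\eta\gamma}t^{-\gamma}\int_t^1u^{-1-\kappa}\,\mathrm du$, where $\kappa=\eta(1-\gamma)-\gamma$. This is at most a constant times the endpoint kernel $K$ only if $\kappa>0$; for $\kappa<0$ the ratio to $K$ is of order $t^{\kappa}$, which is unbounded as $t\to0$.

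This is why one needs an envelope exponent $1<\eta<\delta$ with $\eta(1/\gamma-1)>1$. It is also why the separation $q>1$ of nonconsecutive backbone vertices matters: it is what makes every removal of an inserted child or a nonskeleton local maximum legitimate. The product exponent is encoded in this connector-time integral, not in an integral of $s^{-\gamma\delta}$ against geometric weights.
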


\begin{remark}[Interpretation of the product exponent]\label{rem:product-exponent}
Write \(\alpha:=1/\gamma-1=\tau-2\). If \(D\) has the limiting degree
distribution of the age-dependent model and \(D^\star\) has its size-biased
law, then the degree asymptotics in
\cite[Section~4]{GracarGrauerLuechtrathMorters2019} give
\[
  \PP\{D^\star>k\}
  =\frac{\EE[D\,\1\{D>k\}]}{\EE D}
  =k^{-\alpha+o(1)}.
\]
Thus \(\delta\alpha\) is the product of the connection-profile decay
exponent and the tail exponent of the size-biased degree distribution.

A scaling heuristic in the age-dependent model explains this product.
Vertices born at times of order \(k^{-1/\gamma}\) have degree scale \(k\)
and spatial density of order \(k^{-1/\gamma}\). Two such hubs at their
typical separation \(R\) therefore satisfy \(R^d\asymp k^{1/\gamma}\).
A ball of volume of order \(k\) around the first hub contains order \(k\)
candidate connectors born in \([1/2,1]\), each joining that hub with
probability bounded away from zero. For the edge to the second hub, the
normalised distance variable is of order
\(R^d/k\asymp k^\alpha\), and the connection probability is
\(k^{-\delta\alpha+o(1)}\). If \(N_k\) counts these two-step connections,
their expected number is therefore of order
\[
  \EE N_k=k^{1-\delta\alpha+o(1)}.
\]
For fixed positive retention probability, thinning changes only constants
in this estimate. The connector count thus grows with the hub scale in
the strict robust regime \(\delta\alpha<1\).

This interpretation in terms of the size-biased degree tail makes explicit
the balance underlying the robustness proofs of
Jacob and M\"orters~\cite[Lemma~11 and Proposition~13]{JacobMorters2017}
and Gracar, L\"uchtrath and M\"orters~\cite[Lemma~3.1]{GracarLuechtrathMorters2021}:
young connectors link a sequence of increasingly large hubs. The strict
inequality leaves room to increase the hub scale while keeping many
connectors.
\end{remark}

Item (i) is the robust phase of
\cite[Proposition~14]{JacobMorters2017}. The proof there is written in
dimension one in its final geometric step, but the authors state that no
argument is dimension-specific and that the same proof works in higher
dimensions mutatis mutandis. We prove item (ii). We first establish the
infinite-volume statement for an affine attachment rule
\[
  f(k)=\gamma k+\beta=\gamma(k+a),
  \qquad a=\beta/\gamma>0.
\]
Choose an exponent
\begin{equation}\label{eq:kappa}
  1<\eta<\delta,
  \qquad
  \kappa:=\eta(1-\gamma)-\gamma
  =\gamma\bigl[\eta(1/\gamma-1)-1\bigr]>0.
\end{equation}
Since \(\delta>1\), such an exponent exists if and only if
\(\delta(1/\gamma-1)>1\). Since \(\varphi\le1\), \(\varphi\in\rv_{-\delta}\)
and \(\eta<\delta\), there is a constant \(c_\eta<\infty\) such that
\begin{equation}\label{eq:profile-envelope}
  \varphi(r)\le c_\eta(1\wedge r^{-\eta}),\qquad r>0.
\end{equation}
This follows from Potter's bounds; see
\cite[Theorem~1.5.6]{BinghamGoldieTeugels1987}.
For \(\mathbf x=(x,s)\), \(\mathbf y=(y,t)\), set
\begin{equation}\label{eq:g-q-K}
  g(s,t):=(s\wedge t)^\gamma(s\vee t)^{1-\gamma},
  \qquad
  q(\mathbf x,\mathbf y):=g(s,t)\|x-y\|^d,
\end{equation}
and
\[
  K(\mathbf x,\mathbf y):=1\wedge q(\mathbf x,\mathbf y)^{-\eta}.
\]

\section{Spatial calculus}\label{sec:spatial}

The use of \(\|x-y\|^d\) makes the spatial integrals dimension-free up to a
constant.

\begin{lemma}\label{lem:one-kernel}
Let
\[
  J_{d,\eta}
  :=\int_{\R^d}(1\wedge\|z\|^{-d\eta})\dd z
  =v_d\frac{\eta}{\eta-1}<\infty .
\]
Then, for every \(A>0\),
\[
  \int_{\R^d}\left(1\wedge(A\|z\|^d)^{-\eta}\right)\dd z
  =J_{d,\eta}A^{-1}.
\]
\end{lemma}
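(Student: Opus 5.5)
The lemma has two parts: the closed form for \(J_{d,\eta}\), and the scaling identity in \(A\). Both follow from polar coordinates and a linear change of variables.

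For the closed form, I pass to polar coordinates. The surface area of the unit sphere is \(d v_d\), so
\[
  J_{d,\eta}=d v_d\int_0^\infty (1\wedge r^{-d\eta})\,r^{d-1}\dd r .
\]
I split the radial integral at \(r=1\).
\begin{itemize}
\item On \([0,1]\) the integrand is \(r^{d-1}\), which contributes \(1/d\).
\item On \([1,\infty)\) the integrand is \(r^{d-1-d\eta}\). Its integral is \(\frac{1}{d(\eta-1)}\), finite precisely because \(\eta>1\), as guaranteed by \eqref{eq:kappa}.
\end{itemize}
Summing gives \(J_{d,\eta}=v_d\bigl(1+\frac{1}{\eta-1}\bigr)=v_d\frac{\eta}{\eta-1}<\infty\).

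Equivalently, one can substitute \(u=r^d\) to get \(J_{d,\eta}=v_d\int_0^\infty(1\wedge u^{-\eta})\dd u\). This matches the dimension-free convention noted after \eqref{eq:profile-normalization}.

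For the scaling identity, I substitute \(z=A^{-1/d}w\). The Jacobian is \(\dd z=A^{-1}\dd w\). Since \(A\|z\|^d=\|w\|^d\), the integrand becomes \(1\wedge\|w\|^{-d\eta}\). The integral therefore equals \(A^{-1}J_{d,\eta}\).

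There is no real obstacle here. The only point needing care is the convergence at infinity, which uses \(\eta>1\), and this holds by the choice made in \eqref{eq:kappa}.
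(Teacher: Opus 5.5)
Your proposal is correct and follows essentially the same route as the paper: the linear change of variables (the paper writes it as $w=A^{1/d}z$) reduces the scaled integral to $J_{d,\eta}A^{-1}$. The closed form comes from splitting at $\|w\|=1$ into $v_d+\int_{\|w\|>1}\|w\|^{-d\eta}\dd w=v_d+v_d/(\eta-1)$, which is exactly your radial computation.
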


\begin{proof}
Use the change of variables \(w=A^{1/d}z\). The value of the remaining
integral is
\[
  v_d+\int_{\|w\|>1}\|w\|^{-d\eta}\dd w
  =v_d+\frac{v_d}{\eta-1}
  =v_d\frac{\eta}{\eta-1}.
\]
\end{proof}

\begin{lemma}\label{lem:static-contraction}
Let \(\mathbf x=(x,t)\), \(\mathbf y=(y,s)\), with \(0<s<t\le1\), and suppose
\[
  q(\mathbf x,\mathbf y)
  =s^\gamma t^{1-\gamma}\|x-y\|^d\ge1.
\]
Then
\[
  \int_t^1\dd u\int_{\R^d}\dd z\,
  K(\mathbf x,(z,u))K((z,u),\mathbf y)
  \le
  \frac{C_{d,\eta}}{\kappa}K(\mathbf x,\mathbf y).
\]
\end{lemma}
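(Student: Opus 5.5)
The plan is to split the $z$-integral according to which endpoint $z$ is far from, bound the kernel of the far edge by its pure power using the triangle inequality, and integrate the other kernel over $z$ exactly with \Cref{lem:one-kernel}. Only the $u$-integral then remains, and it is governed by the exponent $\kappa$ from \eqref{eq:kappa}.

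Write $r:=\|x-y\|$. Since $q(\mathbf x,\mathbf y)\ge1$, we have $K(\mathbf x,\mathbf y)=(s^\gamma t^{1-\gamma}r^d)^{-\eta}$. The integration variable satisfies $u\ge t>s$, so $(z,u)$ is the youngest of the three vertices, and
\[
  q(\mathbf x,(z,u))=t^\gamma u^{1-\gamma}\|x-z\|^d,\qquad q((z,u),\mathbf y)=s^\gamma u^{1-\gamma}\|z-y\|^d .
\]
By the triangle inequality, every $z$ satisfies $\|z-y\|\ge r/2$ or $\|z-x\|\ge r/2$. We therefore bound the integral by the sum of the integrals over these two (overlapping) regions. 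On each region we use $1\wedge q^{-\eta}\le q^{-\eta}$ for the far edge.

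\emph{Region $\|z-y\|\ge r/2$.} Here
\[
  K((z,u),\mathbf y)\le 2^{d\eta}(s^\gamma u^{1-\gamma}r^d)^{-\eta}.
\]
Extending the remaining $z$-integral of $K(\mathbf x,(z,u))$ to all of $\R^d$, \Cref{lem:one-kernel} with $A=t^\gamma u^{1-\gamma}$ gives $J_{d,\eta}t^{-\gamma}u^{\gamma-1}$. Dividing by $K(\mathbf x,\mathbf y)$, the $u$-integrand becomes
\[
  2^{d\eta}J_{d,\eta}\,t^{\eta(1-\gamma)-\gamma}u^{-\eta(1-\gamma)+\gamma-1}
  =2^{d\eta}J_{d,\eta}\,t^{\kappa}u^{-\kappa-1}.
\]
Integrating over $u\in[t,1]$ gives at most $2^{d\eta}J_{d,\eta}/\kappa$.

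\emph{Region $\|z-x\|\ge r/2$.} Symmetrically,
\[
  K(\mathbf x,(z,u))\le2^{d\eta}(t^\gamma u^{1-\gamma}r^d)^{-\eta},
\]
and integrating $K((z,u),\mathbf y)$ over $z$ gives $J_{d,\eta}s^{-\gamma}u^{\gamma-1}$. The ratio to $K(\mathbf x,\mathbf y)$ is
\[
  2^{d\eta}J_{d,\eta}\,s^{\gamma(\eta-1)}t^{\eta(1-\gamma)-\gamma\eta}u^{-\kappa-1}.
\]
Integrating in $u$ from $t$ gives at most $t^{-\kappa}/\kappa$. The powers of $t$ then combine to $t^{\gamma-\gamma\eta}$, so the total is at most $2^{d\eta}J_{d,\eta}\kappa^{-1}(s/t)^{\gamma(\eta-1)}\le2^{d\eta}J_{d,\eta}/\kappa$, using $s<t$ and $\eta>1$.

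Summing the two regions yields the claim with $C_{d,\eta}=2^{d\eta+1}J_{d,\eta}$. The only delicate point is the exponent bookkeeping in the $u$-integral. The condition $\kappa>0$ is exactly what makes $\int_t^1u^{-\kappa-1}\dd u\le t^{-\kappa}/\kappa$ absorb the prefactor: in the first region this gives the bound $1/\kappa$ directly, and in the second it leaves the favourable factor $(s/t)^{\gamma(\eta-1)}\le1$. The hypothesis $q\ge1$ is used only to identify $K(\mathbf x,\mathbf y)$ with its pure power form.
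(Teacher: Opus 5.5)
Your proposal is correct and follows essentially the same route as the paper: the same triangle-inequality split into $\|z-y\|\ge\|x-y\|/2$ and $\|z-x\|\ge\|x-y\|/2$, the power-tail bound on the far leg, \Cref{lem:one-kernel} for the near leg, and $\int_t^1u^{-1-\kappa}\dd u\le t^{-\kappa}/\kappa$, with the second region carrying the extra factor $(s/t)^{\gamma(\eta-1)}\le1$. You also write out the exponent bookkeeping for the second region, which the paper only asserts, and you obtain the explicit constant $C_{d,\eta}=2^{d\eta+1}J_{d,\eta}$.
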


\begin{proof}
Write \(L=\|x-y\|^d\). On the region
\(\|z-y\|\ge\|x-y\|/2\), bound the \(\mathbf y\)-leg by its power tail and
integrate the \(\mathbf x\)-leg with \Cref{lem:one-kernel}:
\[
\begin{aligned}
I_1
&\le
C L^{-\eta}s^{-\eta\gamma}
\int_t^1 u^{-\eta(1-\gamma)}
       \left(t^\gamma u^{1-\gamma}\right)^{-1}\dd u\\
&=
C L^{-\eta}s^{-\eta\gamma}t^{-\gamma}
\int_t^1u^{-1-\kappa}\dd u\\
&\le
\frac C\kappa
L^{-\eta}s^{-\eta\gamma}t^{-\eta(1-\gamma)}.
\end{aligned}
\]
The other region gives the same expression multiplied by
\((s/t)^{\gamma(\eta-1)}\le1\). The separation assumption puts the endpoint
kernel in its power-law regime, so the final expression is
\((C/\kappa)K(\mathbf x,\mathbf y)\).
\end{proof}

\begin{lemma}\label{lem:spatial-skeleton}
For fixed birth times \(t_0,\ldots,t_m\), fixed \(x_0=0\), and
\(\mathbf s_j=(x_j,t_j)\),
\[
  \int_{(\R^d)^m}
  \prod_{j=1}^mK(\mathbf s_{j-1},\mathbf s_j)
  \dd x_1\cdots\dd x_m
  =
  J_{d,\eta}^m\prod_{j=1}^m g(t_{j-1},t_j)^{-1}.
\]
\end{lemma}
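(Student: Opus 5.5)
The plan is to integrate out the spatial variables one at a time, starting from the free end \(x_m\) and working back to \(x_1\), using \Cref{lem:one-kernel} at each step. The birth times \(t_0,\ldots,t_m\) are fixed throughout, and \(x_0=0\) is never integrated.

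For the innermost step, fix \(x_0,\ldots,x_{m-1}\). The variable \(x_m\) appears only in the last factor, and by \eqref{eq:g-q-K}
\[
  K(\mathbf s_{m-1},\mathbf s_m)
  =1\wedge\bigl(g(t_{m-1},t_m)\|x_m-x_{m-1}\|^d\bigr)^{-\eta}.
\]
Translate by \(z=x_m-x_{m-1}\), which leaves Lebesgue measure unchanged. Then apply \Cref{lem:one-kernel} with \(A=g(t_{m-1},t_m)>0\). This gives
\[
  \int_{\R^d}K(\mathbf s_{m-1},\mathbf s_m)\dd x_m
  =J_{d,\eta}\,g(t_{m-1},t_m)^{-1}.
\]
The right-hand side is a constant: it does not depend on \(x_{m-1}\), only on the fixed birth times.

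Next, factor this constant out of the remaining integral. What is left is the same type of expression with \(m-1\) factors, so induction on \(m\) produces the full product \(J_{d,\eta}^m\prod_{j=1}^m g(t_{j-1},t_j)^{-1}\). The case \(m=1\) is exactly the computation above with \(x_0=0\).

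Formally, the integrand is nonnegative and measurable, so Tonelli's theorem justifies performing the iterated integration in the order \(x_m,x_{m-1},\ldots,x_1\). Positivity of \(A\) holds because \(g(s,t)>0\) for \(s,t\in(0,1]\).

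I do not expect a real obstacle. The only point to check is that each one-dimensional integral is translation invariant, so the position of the preceding vertex drops out. That is what makes the integral factorise exactly, with equality rather than an inequality. The factorisation needs no ordering of the birth times, because \(g\) is symmetric in its two arguments.
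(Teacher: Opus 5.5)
Your proposal is correct and follows the paper's own proof: integrate out \(x_m,x_{m-1},\ldots,x_1\) in turn, using translation invariance and \Cref{lem:one-kernel} with \(A=g(t_{j-1},t_j)\), and justify the iterated integration by Tonelli. Your additional remarks, that \(g>0\) and that \(g\) is symmetric so the factorisation needs no ordering of the birth times, are accurate.
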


\begin{proof}
Integrate successively in the spatial variables, using translation invariance
and \Cref{lem:one-kernel}.
\end{proof}

\section{Dynamic kernel estimates}\label{sec:dynamic}

Conditional on \(Z_{\mathbf x}(u)=k\), \eqref{eq:profile-normalization}
implies
\[
  \int_{\R^d}
  \varphi\left(\frac{u\|z-x\|^d}{f(k)}\right)\dd z
  =\frac{f(k)}u .
\]
Thus the logarithmic-time indegree process of a fixed vertex is the affine
pure-birth chain with jump rate \(k\mapsto\gamma(k+a)\).

\begin{lemma}\label{lem:degree-moments}
For every \(r>0\) and every integer \(M\ge0\), there is
\(C_{r,M}<\infty\) such that a vertex born at time \(s\), with at most
\(M\) additional forced inedges inserted at deterministic or stopping times,
satisfies
\[
  \EE\left[(a+Z_s^{[M]}(u))^r\right]
  \le
  C_{r,M}\left(\frac us\right)^{r\gamma},
  \qquad s\le u\le1.
\]
\end{lemma}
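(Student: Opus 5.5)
We pass to logarithmic time \(\tau=\log(u/s)\in[0,\log(1/s)]\). Without forced edges, the preceding computation shows that \(Z_s(u)\) is, in \(\tau\), a pure-birth chain with rate \(\gamma(k+a)\) started at \(0\), that is, a Yule-type process with immigration. The forced inedges are handled afterwards, by comparison.

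\textbf{Step 1 (no forced edges, integer moments).} Write \(Y(\tau)=a+Z(\tau)\). For integer \(r\ge1\), the generator applied to \(y\mapsto y^{\underline r}:=y(y+1)\cdots(y+r-1)\) gives exactly \(r\gamma\,y^{\underline r}\), because \(\gamma y\bigl[(y+1)^{\underline r}-y^{\underline r}\bigr]=r\gamma y^{\underline r}\). Hence \(\EE[Y(\tau)^{\underline r}]=a^{\underline r}e^{r\gamma\tau}=a^{\underline r}(u/s)^{r\gamma}\). Since \(Y\ge a>0\), we have \(y^r\le y^{\underline r}\), so \(\EE[Y^r]\le C_r(u/s)^{r\gamma}\). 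Non-integer \(r>0\) follows from Jensen's (Lyapunov's) inequality with \(\lceil r\rceil\).

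\textbf{Step 2 (forced edges).} An inserted inedge raises the count by one and leaves the dynamics afterwards unchanged. Since the rate is increasing in \(k\), the natural coupling shows that the chain with \(M\) forced jumps at stopping times is dominated by the chain started from \(a+M\) at time \(s\) with no forced jumps. To see this, couple both chains through the same Poisson clock construction (thinning a common Poisson process of rate \(\gamma(k+a)\) levels), so that \(Z^{[M]}(u)\le \tilde Z(u)+M\). Here \(\tilde Z\) is a chain that has received the same driving noise but possibly jumps more often, because its state is higher.

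The cleanest way to carry this out is monotone coupling of birth chains. Let \(W\) be the chain started at \(M\). At any forced jump of \(Z^{[M]}\), the invariant \(Z^{[M]}\le W\) holds before the jump by induction, and the number of forced jumps used so far is at most \(M\). Before all forced jumps are used, we have in fact \(Z^{[M]}+(\text{remaining forced jumps})\le W\), and this quantity is preserved by a forced jump. Natural jumps preserve the inequality by monotonicity of the rate under the standard coupling, in which a jump of the lower chain forces a jump of the upper chain whenever the two states differ by the same amount.

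Then Step 1 applied to \(a+M\) in place of \(a\) (the formula only used \(a>0\)) gives
\[
\EE[(a+Z^{[M]}(u))^r]\le\EE[(a+W(u))^r]\le C_{r,M}(u/s)^{r\gamma}.
\]

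\textbf{Main obstacle.} The step needing care is Step 2. The forced times are stopping times, possibly depending on the chain itself and on the surrounding Poisson environment, so the coupling must be constructed on a filtration in which the natural jumps retain their conditional intensity \(\gamma(Z+a)\). If that coupling becomes awkward, a fallback is a direct martingale argument. The process \(e^{-r\gamma\tau}(Y^{[M]})^{\underline r}\) is a submartingale whose only extra increments come from the at most \(M\) forced jumps. Each forced jump multiplies \(Y^{\underline r}\) by at most \((1+1/a)^r\). Optional stopping and induction on the number of forced jumps used then give \(\EE[(Y^{[M]})^{\underline r}]\le(1+1/a)^{rM}a^{\underline r}(u/s)^{r\gamma}\), which also proves the lemma.
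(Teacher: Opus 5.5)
Your proof is correct and follows the paper's route. Your invariant \(Z^{[M]}+(\text{remaining forced jumps})\le W\) is exactly the paper's observation that the gap between the dominating chain and \(a+Z_s^{[M]}\) starts at least at \(M\) and drops by one only at forced jumps, so the forced edges are absorbed into a raised initial condition, with Jensen handling non-integer \(r\). The only difference is how you compute moments of the dominating chain: the paper uses a Yule process with \(\lceil a+M\rceil\) particles, its geometric marginals and Minkowski, whereas you keep the affine chain started at \(a+M\) and use the exact identity \(\EE[Y^{\underline r}(\tau)]=Y(0)^{\underline r}e^{r\gamma\tau}\); this avoids rounding up to an integer but needs a routine localisation to justify Dynkin's formula for the unbounded function \(y^{\underline r}\).
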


\begin{proof}[Proof of \Cref{lem:degree-moments}]
Write \(W(v)=a+Z_s^{[M]}(v)\). Conditional on
\(Z_{\mathbf x}(v-)=k\), the vertex gains a child at rate
\(f(k)/v=\gamma(k+a)/v\). Thus, in elapsed logarithmic time
\(\sigma=\log(v/s)\), the unforced process has jump rate \(\gamma W\). Forced
inedges add at most \(M\) unit upward jumps at deterministic or stopping times.

Let \(\bar W\) be a Yule process with per-particle rate \(\gamma\), started
from
\[
  N=\lceil a+M\rceil
\]
particles at elapsed time \(0\). Couple \(W\) and \(\bar W\) by matching each
spontaneous birth of \(W\) with a birth of \(\bar W\), letting \(\bar W\) make
additional births at rate \(\gamma(\bar W-W)\), and leaving the forced jumps of
\(W\) unmatched. The gap \(\bar W-W\) starts at \(N-a\ge M\), increases at the
additional births of \(\bar W\), and decreases by one at each of the at most
\(M\) forced jumps, so it remains nonnegative. Hence
\[
  W(u)\le \bar W(\log(u/s))
\]
for all \(u\), with a bound that depends on the number of interventions but
not on their times.

For a one-particle Yule process \(\bar W^{(1)}\),
\[
  \PP\{\bar W^{(1)}(\sigma)=k\}=\varrho(1-\varrho)^{k-1},\qquad k\ge1,
  \quad \varrho=e^{-\gamma\sigma}.
\]
For an integer \(\ell\ge1\), the bound
\[
  k^\ell\le k(k+1)\cdots(k+\ell-1)=\ell!\binom{k+\ell-1}{\ell}
\]
and
\[
  \sum_{k\ge1}\binom{k+\ell-1}{\ell}x^{k-1}=(1-x)^{-(\ell+1)}
\]
give
\[
  \EE\big[\bar W^{(1)}(\sigma)^\ell\big]
  =
  \varrho\sum_{k\ge1}k^\ell(1-\varrho)^{k-1}
  \le
  \ell!\,\varrho^{-\ell}
  =
  \ell!\,e^{\ell\gamma\sigma}.
\]
Since \(\bar W(\sigma)\) is a sum of \(N\) independent copies of
\(\bar W^{(1)}(\sigma)\), Minkowski's inequality yields
\[
  \EE[\bar W(\sigma)^\ell]\le N^\ell \ell!\,e^{\ell\gamma\sigma}.
\]
Fix \(r>0\) and put \(\ell=\lceil r\rceil\). By Jensen's inequality and the
coupling,
\[
  \EE\big[(a+Z_s^{[M]}(u))^r\big]
  \le
  \EE\big[\bar W(\log(u/s))^\ell\big]^{r/\ell}
  \le
  (N^\ell\ell!)^{r/\ell}\left(\frac us\right)^{r\gamma}.
\]
This proves the claim.
\end{proof}

We now prove the path estimate used in the backbone argument. It is a
pointwise version of \cite[Proposition~16]{JacobMorters2017}, and its proof
uses the quick paths, the splitting procedure and the BK inequality of
\cite[Section~6]{JacobMorters2017}. We give the details for two reasons. The
sequences to which we apply the estimate contain deterministic short
transitions, and we need a bound for every fixed choice of the spatial
positions, not only after spatial
integration.

A \emph{common child} of two vertices is a vertex that is younger than both
and adjacent to both in \(G^\infty\). Let \(\mathbf z_0,\ldots,\mathbf z_n\)
be distinct points of \(\R^d\times(0,1]\), written
\(\mathbf z_i=(z_i,u_i)\), with pairwise distinct birth times. The path
neighbours of \(\mathbf z_i\) are \(\mathbf z_{i-1}\) and \(\mathbf z_{i+1}\),
whenever these indices lie in \(\{0,\ldots,n\}\), and the pair
\(\{\mathbf z_{i-1},\mathbf z_i\}\) is the \(i\)th transition. A vertex
\(\mathbf z_i\) is \emph{age-regular} if it is older than at least one of its
path neighbours, and a \emph{local birth-time maximum} otherwise.

\begin{definition}\label{def:augmented-quick}
Let \(S\subseteq\{1,\ldots,n\}\) and \(R=\{1,\ldots,n\}\setminus S\). The
transitions with index in \(S\) are called \emph{short}, and those with index
in \(R\) are called \emph{required}. Under a law in which
\(\mathbf z_0,\ldots,\mathbf z_n\) are vertices, the sequence
\((\mathbf z_0,\ldots,\mathbf z_n)\) is an \emph{augmented quick path} with
short set \(S\) if the following three conditions hold.
\begin{enumerate}[label=(q\arabic*),start=0]
\item We have \(q(\mathbf z_{i-1},\mathbf z_i)\le1\) for every \(i\in S\),
  and \(\mathbf z_{i-1}\) and \(\mathbf z_i\) are adjacent in \(G^\infty\) for
  every \(i\in R\).
\item No age-regular vertex is adjacent to a younger vertex of the sequence
  other than its path neighbours.
\item Two age-regular vertices \(\mathbf z_i\) and \(\mathbf z_j\) with
  \(|i-j|\ge2\) have no common child, unless \(|i-j|=2\) and the vertex
  between them is a local birth-time maximum. In that case, no
  common child is older than the intermediate vertex.
\end{enumerate}
We write \(\mathcal Q_S\) for this event. A \emph{quick path} is an augmented
quick path with \(S=\varnothing\).
\end{definition}

For \(S=\varnothing\), conditions (q1) and (q2) are equivalent to the
properties (ii) and (iii) which define quick paths in
\cite[Section~6.1]{JacobMorters2017}. Indeed, if \(S=\varnothing\), then by
(q0) the intermediate vertex in (q2) is adjacent to its two path neighbours,
and it is younger than both. It is therefore a common child of them, and (q2)
says that it is their oldest common child. All
path estimates are stated for prescribed vertices with pairwise distinct birth
times. This loses no mass in the Palm integrations below, because the
birth-time coordinate is diffuse.

\begin{proposition}\label{prop:quick-path}
There is a constant \(C_{\mathrm{qp}}\ge1\), depending only on \(\gamma\),
\(a\), \(\eta\) and \(c_\eta\), such that the following holds. For every
\(n\ge1\), all distinct \(\mathbf z_0,\ldots,\mathbf z_n\in\R^d\times(0,1]\)
with pairwise distinct birth times, and every \(S\subseteq\{1,\ldots,n\}\),
\[
  \PP_{\mathbf z_0,\ldots,\mathbf z_n}(\mathcal Q_S)
  \le
  C_{\mathrm{qp}}^n
  \prod_{i=1}^nK(\mathbf z_{i-1},\mathbf z_i).
\]
\end{proposition}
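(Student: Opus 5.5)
Short transitions with \(i\in S\) cost nothing, since \(q(\mathbf z_{i-1},\mathbf z_i)\le1\) gives \(K(\mathbf z_{i-1},\mathbf z_i)=1\). It therefore suffices to bound the probability of the required adjacencies together with (q1) and (q2) by \(C^{|R|}\prod_{i\in R}K\), up to a factor \(C^n\). I follow the splitting procedure of \cite[Section~6]{JacobMorters2017}. Cut the sequence at the local birth-time minima, i.e.\ at vertices older than both path neighbours. Each piece then consists of one increasing and one decreasing run in birth time, meeting at a single local maximum. Within a piece, every required edge joins an older vertex \(\mathbf z_j\) to a younger path neighbour. For each older vertex, the events of (q0) concern only the indegree process of that vertex between its birth and the birth times of its younger path neighbours. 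Conditions (q1) and (q2) are exactly what make these events for different older vertices witnessed by disjoint sets of Poisson points and marks. Specifically, (q1) rules out extra younger neighbours among the path vertices, and (q2) rules out shared children, except in the permitted oldest-common-child configuration at a local maximum. The Poisson BK inequality, as used in \cite{JacobMorters2017}, then bounds \(\PP(\mathcal Q_S)\) by a product over older vertices \(\mathbf z_j\) of the probability that \(\mathbf z_j\) is joined to its (at most two) younger required neighbours. Here the indegree of \(\mathbf z_j\) may be perturbed by a bounded number \(M\le 2\) of forced arrivals, namely the other path edges. I expect this disjoint-witness step to be the main obstacle. In particular, the exceptional case in (q2), where two age-regular vertices share the intermediate local maximum as a child, has to be handled with care: the edge to the local maximum is itself counted as a forced arrival for both endpoints, not as a shared witness.

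For a single older vertex \(\mathbf x=(x,s)\) and a younger neighbour \(\mathbf y=(y,t)\), the edge rule and \eqref{eq:profile-envelope} give
\[
  \PP\{\mathbf x\sim\mathbf y\mid Z_{\mathbf x}(t-)\}
  \le c_\eta\Bigl(1\wedge\bigl(t\|x-y\|^d/(\gamma(a+Z_{\mathbf x}(t-)))\bigr)^{-\eta}\Bigr)
  \le c_\eta\,\gamma^{\eta}\,(a+Z_{\mathbf x}(t-))^{\eta}\bigl(1\wedge(t\|x-y\|^d)^{-\eta}\bigr)
\]
in the power regime, and the bound is trivially \(1\) otherwise. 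Taking expectations with \Cref{lem:degree-moments} at \(r=\eta\) yields \(C(t/s)^{\eta\gamma}(t\|x-y\|^d)^{-\eta}=C q(\mathbf x,\mathbf y)^{-\eta}\), hence a bound of \(C K(\mathbf x,\mathbf y)\). The forced inedges needed to realise the other required edges and the conditioning on earlier path edges enter only through \(M\). If a vertex has two younger required neighbours \(\mathbf y_1,\mathbf y_2\), with birth times \(t_1<t_2\), I condition on the first edge and treat it as a forced arrival. Hölder's inequality with \Cref{lem:degree-moments} at \(r=2\eta\) then gives \(C K(\mathbf x,\mathbf y_1)K(\mathbf x,\mathbf y_2)\). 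Alternatively, the sequential bound \(\EE[(a+Z(t_1-))^\eta \mid \cdot]\) followed by a moment at \(t_2\) works, using that \(a+Z\) grows by at most one at the forced jump. This needs a version of \Cref{lem:degree-moments} conditional on the past, which I would obtain from the Yule coupling started from the current state.

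Multiplying over all older vertices, each required transition receives exactly one factor \(CK\), and each short transition receives the factor \(1=K\). This gives \(C_{\mathrm{qp}}^n\prod_iK(\mathbf z_{i-1},\mathbf z_i)\). The constant depends only on \(\gamma,a,\eta,c_\eta\), through \(C_{2\eta,2}\) and the BK step, and not on positions, since no spatial integration is performed.
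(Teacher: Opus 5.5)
Your proposal has a genuine gap at the step you flagged as the main obstacle. Neither grouping you use yields disjointly occurring events. Condition (q2) places no constraint on \emph{consecutive} vertices, which may share Poisson children freely.

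\textbf{Counterexample to the per-vertex product.} Take \(n=3\) with \(u_1<u_2<\min\{u_0,u_3\}\), for instance \(u_1=0.1\), \(u_2=0.2\), \(u_3=0.5\), \(u_0=0.6\).
\begin{itemize}
\item The age-regular vertices are \(\mathbf z_1\), with window \((u_1,u_0)\), and \(\mathbf z_2\), with window \((u_2,u_3)\).
\item A Poisson point born in \((u_2,u_3)\) and adjacent to both \(\mathbf z_1\) and \(\mathbf z_2\) is allowed on \(\mathcal Q_S\).
\item This point raises both indegrees. It therefore belongs to every witness of \(\{\mathbf z_1\sim\mathbf z_0,\ \mathbf z_1\sim\mathbf z_2\}\) and of \(\{\mathbf z_2\sim\mathbf z_3\}\).
\end{itemize}
So the product over older vertices does not follow from BK.

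\textbf{Cutting at local minima fails for the same reason.} Here \(\mathbf z_1\) is a local minimum. It lies in both pieces \((\mathbf z_0,\mathbf z_1)\) and \((\mathbf z_1,\mathbf z_2,\mathbf z_3)\), and the events of both pieces depend on its indegree process.

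\textbf{A second bad configuration.} Let \(\mathbf z_{r+1}\) be a local maximum with \(u_{r-1}>u_{r+1}\) and \(u_{r+3}>u_{r+1}\). Then (q2) permits common Poisson children of \(\mathbf z_r\) and \(\mathbf z_{r+2}\) that are younger than \(\mathbf z_{r+1}\). Such children lie in both windows. Your treatment of the exceptional case misplaces the difficulty. The local maximum is a prescribed vertex, and its edges are decided by marks of consecutive pairs, so it is never a Poisson witness.

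\textbf{The missing idea: the grouping of \cite{JacobMorters2017}.} The sequence is split according to \eqref{eq:splitting}, cutting at vertices younger than the two vertices on one side. With this splitting:
\begin{itemize}
\item each vertex contributes to at most one part, and its younger neighbours lie in that part (\Cref{lem:parts});
\item in both configurations above, the vertices involved end up in a single part;
\item \Cref{lem:disjoint-witnesses} shows that witness sets of different parts are disjoint on \(\mathcal Q_S^{(0)}\).
\end{itemize}

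\textbf{The price of this grouping.} Within a part, the degree processes of several contributing vertices are correlated through common children, so a per-vertex bound is not available. Each part must be bounded jointly, as in \Cref{lem:part-bound}:
\begin{itemize}
\item insert the required transitions to obtain a graph independent of their marks;
\item write the bound as \(\EE[\prod_i p_i]\);
\item apply H\"older's inequality over up to four factors.
\end{itemize}
This needs degree moments up to order \(4\eta\) with \(M=2\), not only \(2\eta\).

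\textbf{Two further ingredients you omit.} First, the BK step needs the passage to \(G^{(0)}\), in which nonconsecutive prescribed pairs are deleted. This makes the part events increasing and keeps forced arrivals at most two off the event as well. Second, one must condition on the marks \(\Upsilon\) of consecutive pairs and show that the quantities \(\PP(A_j\mid\Upsilon)\) are independent across parts, which uses \Cref{lem:parts}(iv).
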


Before giving the proof, we collect some facts about the Palm law
\(\PP_{\mathbf z_0,\ldots,\mathbf z_n}\). Under this law the vertex set is
\(X'\cup\{\mathbf z_0,\ldots,\mathbf z_n\}\), where \(X'\) is a Poisson point
process of unit intensity on \(\R^d\times(0,1]\), and all pairs of vertices
carry independent uniform marks. We call \(\mathbf z_0,\ldots,\mathbf z_n\)
the prescribed vertices, and for \(\mathbf w\in X'\) we write
\[
  m(\mathbf w)
  =\big(V_{\mathbf z_0,\mathbf w},\ldots,V_{\mathbf z_n,\mathbf w}\big)
  \in[0,1]^{n+1}.
\]
The edge rule \eqref{eq:edge-rule} decides whether an older vertex
\(\mathbf x\) and a younger vertex \(\mathbf y=(y,t)\) are adjacent using only
the mark \(V_{\mathbf x,\mathbf y}\), the positions and birth times of the two
vertices, and the indegree \(Z_{\mathbf x}(t-)\). Hence the indegree process
of a prescribed vertex \(\mathbf z_r\) is determined by the points of \(X'\)
that are younger than \(\mathbf z_r\), together with the \(r\)th coordinates
of their marks, and by the marks \(V_{\mathbf z_r,\mathbf z_j}\) of the
younger prescribed vertices \(\mathbf z_j\). In particular, the indegree
processes of different prescribed vertices depend on each other only through
the common point process \(X'\).

In the proof we compare graphs which are built from the same marks by the
rule \eqref{eq:edge-rule}, but in which some pairs of prescribed vertices are
deleted or inserted, or in which \(X'\) is replaced by a subset. A deleted pair
is never an edge, and an inserted pair is always an edge. Every other edge
between a prescribed vertex and a younger vertex is decided by
\eqref{eq:edge-rule}, using the indegree of the prescribed vertex in the graph
under consideration. In every graph of this type, each prescribed vertex has
finitely many children almost surely. For graphs built from all of \(X'\) this
follows from \Cref{lem:degree-moments}, since a prescribed vertex receives at
most \(n\) inedges from other prescribed vertices. The other graphs below are
built from finite subsets of \(X'\).

\begin{lemma}\label{lem:comparison}
Fix all marks, a prescribed vertex \(\mathbf z_r\) and \(T\in(u_r,1]\).
Consider two graphs of the type just described, in each of which
\(\mathbf z_r\) has finitely many children born before \(T\).
\begin{enumerate}[label=(\roman*)]
\item Suppose that every vertex born in \((u_r,T)\) which is a child of
  \(\mathbf z_r\) in at least one of the two graphs is a vertex of both
  graphs, and that it is either a child of \(\mathbf z_r\) in both graphs or
  has its edge to \(\mathbf z_r\) decided by \eqref{eq:edge-rule} in both
  graphs. Then the two graphs give \(\mathbf z_r\) the same children born
  before \(T\). In particular, the indegree processes of \(\mathbf z_r\) in the
  two graphs agree on \([u_r,T)\).
\item Suppose that the two graphs have the same deleted and the same inserted
  pairs, and that every vertex of the first graph is a vertex of the second.
  Then every child of \(\mathbf z_r\) in the first graph born before \(T\) is
  a child of \(\mathbf z_r\) in the second graph.
\end{enumerate}
\end{lemma}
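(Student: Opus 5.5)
Both parts follow from an induction over the children of \(\mathbf z_r\) in order of birth time. This is possible because, in each graph, \(\mathbf z_r\) has only finitely many children born before \(T\). The key observation is that the edge rule \eqref{eq:edge-rule} decides whether a vertex \(\mathbf y=(y,t)\) younger than \(\mathbf z_r\) is a child of \(\mathbf z_r\) from three ingredients only: the mark \(V_{\mathbf z_r,\mathbf y}\), the positions and birth times, and the indegree \(Z_{\mathbf z_r}(t-)\). Since the marks are fixed and common to both graphs, the only graph-dependent input is the indegree just before time \(t\), or the deletion or insertion status of the pair.

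For (i), list the union of the children of \(\mathbf z_r\) born in \((u_r,T)\) over both graphs as \(t_1<t_2<\dots<t_N\). Birth times are almost surely distinct among Poisson points and are assumed distinct among the prescribed vertices, and \(N<\infty\) by hypothesis. We show by induction on \(j\) that the two graphs agree on whether the vertex born at \(t_j\) is a child. Suppose they agree for all indices \(<j\). Then the indegree processes of \(\mathbf z_r\) coincide on \([u_r,t_j)\), because between consecutive \(t_i\) neither graph has a child: any child in either graph appears in the list. The vertex at \(t_j\) belongs to both graphs by assumption. If it is a child in both, there is nothing to show. Otherwise its edge is decided by \eqref{eq:edge-rule} in both graphs, with the same mark and the same value \(Z_{\mathbf z_r}(t_j-)\), so the decision agrees. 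This proves that the children born before \(T\) coincide, and hence the indegree processes agree on \([u_r,T)\).

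For (ii), the children of \(\mathbf z_r\) in the first graph born before \(T\) are \(t_1<\dots<t_N\). We prove by induction that each is a child in the second graph, maintaining the invariant \(Z^{(1)}(t-)\le Z^{(2)}(t-)\) for \(t\le t_j\). Take the vertex at \(t_j\), which is a vertex of the second graph. If its pair with \(\mathbf z_r\) is inserted, it is a child in both graphs, since the inserted pairs agree. It cannot be a deleted pair, because it is a child in the first graph. Otherwise the edge is decided by \eqref{eq:edge-rule} in both graphs. The second graph contains all children of the first among \(t_1,\dots,t_{j-1}\) by induction, so \(Z^{(2)}(t_j-)\ge Z^{(1)}(t_j-)\). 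Since \(f\) is nondecreasing and \(\varphi\) is nonincreasing, the threshold in \eqref{eq:edge-rule} is at least as large in the second graph, so the mark that accepts the edge in the first graph also accepts it in the second.

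The main subtlety is well-foundedness of the induction in (i). We need the union of the two sets of children to be finite and ordered, so that a first discrepancy exists. This is exactly what the finiteness hypothesis provides, together with distinct birth times, which hold almost surely. We argue via a minimal discrepancy time to avoid issues with accumulation.
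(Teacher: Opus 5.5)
Your proof is correct and takes essentially the same approach as the paper. Your forward induction over the finitely many relevant children, ordered by birth time, is equivalent to the paper's minimal-counterexample argument: in (i) the paper picks the oldest vertex that is a child in exactly one graph, and in (ii) the oldest child of the first graph that is not a child in the second. The key steps coincide: fixed marks, equal (respectively dominated) indegree just before the decision time, and the monotonicity of \(f\) and \(\varphi\).
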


\begin{proof}
For (i), suppose that the claim fails. Then the set of vertices born in
\((u_r,T)\) which are children of \(\mathbf z_r\) in exactly one of the two
graphs is finite and nonempty. Let \(\mathbf y=(y,t)\) be its oldest element.
By the choice of \(\mathbf y\), the two graphs give \(\mathbf z_r\) the same
children before time \(t\), and hence the same indegree
\(Z_{\mathbf z_r}(t-)\). By assumption, \(\mathbf y\) is either a child of
\(\mathbf z_r\) in both graphs, or its edge to \(\mathbf z_r\) is decided by
\eqref{eq:edge-rule} in both graphs, with the same mark and the same indegree.
In both cases \(\mathbf y\) is a child of \(\mathbf z_r\) in both graphs or in
neither. This contradicts the choice of \(\mathbf y\).

For (ii), suppose that the claim fails, and let \(\mathbf y=(y,t)\) be the
oldest child of \(\mathbf z_r\) in the first graph which is born before \(T\)
and is not a child of \(\mathbf z_r\) in the second graph. Every child of
\(\mathbf z_r\) in the first graph born before \(t\) is a child in the second
graph, so the indegree of \(\mathbf z_r\) at time \(t-\) is at least as large
in the second graph as in the first. The vertex \(\mathbf y\) is a vertex of
the second graph. Its edge to \(\mathbf z_r\) is not deleted, because
\(\mathbf y\) is a child of \(\mathbf z_r\) in the first graph. If the edge is
inserted, then \(\mathbf y\) is a child in both graphs. Otherwise the edge is
decided by \eqref{eq:edge-rule} in both graphs with the same mark. Since \(f\)
is nondecreasing and \(\varphi\) is nonincreasing, the right side of
\eqref{eq:edge-rule} is nondecreasing in the indegree. Hence \(\mathbf y\) is
a child of \(\mathbf z_r\) in the second graph, which is a contradiction.
\end{proof}

We write \(G^{(0)}\) for the graph obtained from \(G^\infty\) by deleting
every pair \(\{\mathbf z_i,\mathbf z_j\}\) with \(|i-j|\ge2\), and
\(\mathcal Q_S^{(0)}\) for the event of \Cref{def:augmented-quick} with
\(G^{(0)}\) in place of \(G^\infty\). This is the graph considered in
\cite[p.~1709]{JacobMorters2017}, where the indegree of \(\mathbf z_i\) is not
allowed to grow because of the vertices \(\mathbf z_{i'}\) with
\(|i'-i|\ge2\). There this is expressed by conditioning on
\(V(\mathbf z_i,\mathbf z_{i'})=0\). Under the edge rule
\eqref{eq:edge-rule}, which is \cite[Equation~(3)]{JacobMorters2017}, a mark
equal to zero produces an edge. The conditioning should therefore be on
\(V(\mathbf z_i,\mathbf z_{i'})=1\). Since \(\varphi<1\), a mark
equal to one produces no edge, so this conditioning has the same effect as
deleting these pairs.

\begin{lemma}\label{lem:suppression}
Under \(\PP_{\mathbf z_0,\ldots,\mathbf z_n}\), we have
\(\mathcal Q_S\subseteq\mathcal Q_S^{(0)}\) almost surely. Moreover, on
\(\mathcal Q_S\), every age-regular vertex has the same children in
\(G^\infty\) and in \(G^{(0)}\).
\end{lemma}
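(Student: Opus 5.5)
We prove the second assertion first and derive the inclusion \(\mathcal Q_S\subseteq\mathcal Q_S^{(0)}\) from it. The graphs \(G^\infty\) and \(G^{(0)}\) are built from the same marks and the same vertex set. The only difference is that the pairs \(\{\mathbf z_i,\mathbf z_j\}\) with \(|i-j|\ge2\) are deleted in \(G^{(0)}\). Thus \(G^{(0)}\) has the same inserted pairs as \(G^\infty\) (none) and a superset of its deleted pairs.

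Fix an age-regular vertex \(\mathbf z_r\) and work on \(\mathcal Q_S\). We apply \Cref{lem:comparison}(i) with \(T=1\), or with \(T\uparrow1\) and finitely many children before each \(T\) by \Cref{lem:degree-moments}. Let \(\mathbf y\) be a vertex born after \(u_r\) that is a child of \(\mathbf z_r\) in at least one of the two graphs. We check that its edge to \(\mathbf z_r\) is decided by \eqref{eq:edge-rule} in both graphs. This is automatic unless \(\mathbf y=\mathbf z_j\) with \(|j-r|\ge2\), since only such pairs are deleted. In that exceptional case we must show that \(\mathbf y\) is a child of \(\mathbf z_r\) in neither graph.

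\begin{itemize}
\item In \(G^{(0)}\) the pair is deleted, so \(\mathbf y\) is not a child there.
\item In \(G^\infty\), condition (q1) states that an age-regular vertex is not adjacent to a younger prescribed vertex other than its path neighbours. Hence \(\mathbf z_j\) is not a child of \(\mathbf z_r\) in \(G^\infty\).
\end{itemize}

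So no vertex in the symmetric difference of the two child sets can be an exceptional one. One could attempt the hypothesis of \Cref{lem:comparison}(i) literally, but it only requires the edge decision to be by the rule in both graphs, or a child in both. The cleaner route is to rerun the oldest-discrepancy argument of that lemma directly. Take the oldest vertex that is a child in exactly one graph. If it is non-exceptional, the indegrees agree before it, and the same mark gives the same decision, a contradiction. If it is exceptional, it is a child in neither graph by the two points above, again a contradiction. Hence every age-regular vertex has identical children in both graphs.

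Next we verify \(\mathcal Q_S^{(0)}\) condition by condition.

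\emph{(q0).} For \(i\in R\) the edge \(\{\mathbf z_{i-1},\mathbf z_i\}\) is a consecutive pair, so it is not deleted. The older endpoint of the pair is age-regular, because it is older than a path neighbour. By the previous step its children agree in the two graphs, so the edge present in \(G^\infty\) is present in \(G^{(0)}\). Short transitions only involve \(q\) and are unaffected.

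\emph{(q1).} Adjacency of an age-regular vertex to a younger vertex is a statement about its children. These agree in the two graphs, so (q1) transfers verbatim.

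\emph{(q2).} Common children of two age-regular vertices are, by definition, younger vertices adjacent to both. These are children of each. Since the children of age-regular vertices coincide in the two graphs, the sets of common children coincide, and (q2) transfers. This includes the clause about common children older than the intermediate local maximum.

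\emph{The main obstacle} is the first step: the self-referential dependence of the indegree, and hence of later edges, on earlier discrepancies. This forces the inductive oldest-discrepancy argument rather than a simple edge-by-edge comparison. One must also check that finiteness of the child sets makes "oldest discrepancy" well defined. This holds a.s. by \Cref{lem:degree-moments} with \(M\le n\) forced inedges. A monotonicity sandwich using \Cref{lem:comparison}(ii) would also give \(G^{(0)}\)-children \(\subseteq\) \(G^\infty\)-children, but equality requires the argument above.
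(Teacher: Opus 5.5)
Your proof is correct and follows the paper's own argument. You use (q1) together with the deletion of nonconsecutive pairs to show that every child of an age-regular \(\mathbf z_r\) in either graph is a point of \(X'\) or a path neighbour, apply \Cref{lem:comparison}(i) with \(T=1\), and then transfer (q0)--(q2) through the equality of children. Two small points. Your case check already verifies the hypothesis of \Cref{lem:comparison}(i) literally, so rerunning the oldest-discrepancy argument is unnecessary. Your closing aside is slightly off, because \Cref{lem:comparison}(ii) assumes the two graphs have the same deleted pairs, and \(G^\infty\) and \(G^{(0)}\) do not.
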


\begin{proof}
Assume that \(\mathcal Q_S\) occurs, and let \(\mathbf z_r\) be age-regular.
A vertex which is a child of \(\mathbf z_r\) in \(G^\infty\) or in
\(G^{(0)}\) is either a point of \(X'\) or a path neighbour of
\(\mathbf z_r\). Indeed, by (q1) no other prescribed vertex is a child of
\(\mathbf z_r\) in \(G^\infty\), and the corresponding pairs are deleted in
\(G^{(0)}\). The edges between \(\mathbf z_r\) and these vertices are decided
by \eqref{eq:edge-rule} in both graphs. \Cref{lem:comparison}(i), applied
with \(T=1\), shows that \(\mathbf z_r\) has the same children in the two
graphs. This proves the second statement.

We now check the conditions of \Cref{def:augmented-quick} in \(G^{(0)}\). The
condition on the short transitions in (q0) does not depend on the graph. The
older endpoint of a required transition is age-regular, so by the second
statement the transition is an edge of \(G^{(0)}\). Condition (q1) holds in
\(G^{(0)}\), because the pairs it concerns are deleted. A common child of two
age-regular vertices is a child of each of them. By the second statement, two
age-regular vertices have the same common children in \(G^\infty\) and in
\(G^{(0)}\), so (q2) holds in \(G^{(0)}\).
\end{proof}

We now recall the splitting procedure of
\cite[Section~6.2]{JacobMorters2017}, which depends only on the birth times.
We introduce a splitting at the index \(i\in\{0,\ldots,n\}\) if
\begin{equation}\label{eq:splitting}
  u_i>u_{i-1}\ \text{and}\ u_i>u_{i-2},
  \qquad\text{or}\qquad
  u_i>u_{i+1}\ \text{and}\ u_i>u_{i+2},
\end{equation}
where every inequality involving an index outside \(\{0,\ldots,n\}\) is
omitted. In particular, there is a splitting at \(0\) and at \(n\). Let
\(0=n_0<n_1<\cdots<n_k=n\) be the splitting indices. The \(j\)th part is
\((\mathbf z_{n_{j-1}},\ldots,\mathbf z_{n_j})\), and it contains the
transitions with indices \(n_{j-1}+1,\ldots,n_j\). Each transition belongs to
exactly one part. A vertex of the \(j\)th part is a \emph{local maximum of the
part} if it is younger than each of its path neighbours in the part. A vertex
\emph{contributes} to the \(j\)th part if it belongs to the part and is not a
local maximum of the part. Note that a vertex which contributes to some part
is age-regular.

\begin{lemma}\label{lem:parts}
The parts have the following properties.
\begin{enumerate}[label=(\roman*)]
\item Each part contains at most four transitions.
\item Each vertex contributes to at most one part.
\item The older endpoint of a transition contributes to the part containing
  the transition.
\item If \(\mathbf z_r\) contributes to the \(j\)th part, then every path
  neighbour of \(\mathbf z_r\) that is younger than \(\mathbf z_r\) belongs to
  the \(j\)th part, and so does the transition joining the two.
\end{enumerate}
\end{lemma}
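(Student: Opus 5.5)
The plan is a purely combinatorial argument on the sequence of birth times \(u_0,\ldots,u_n\), using only the splitting rule \eqref{eq:splitting}. The first step is to describe the birth-time profile between two consecutive splitting indices. Suppose \(n_{j-1}<i<n_j\). Then no splitting occurs at \(i\). So whenever \(u_i>u_{i-1}\), we must have \(u_i<u_{i-2}\) (if \(i-2\ge0\)), and symmetrically on the right. I will show that a part, read as a sequence of birth times, has no two interior local maxima, since these would force a splitting in between. More precisely, the shape is at most ``down, up'' or ``up, down, up, down'' around one local maximum. From this I will show directly that a part spans at most four transitions, which gives (i). For the count, assume a run of five consecutive transitions with no interior splitting and consider the youngest interior vertex \(\mathbf z_i\). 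It is younger than both neighbours. Being unsplit, it must be older than \(\mathbf z_{i-2}\) and \(\mathbf z_{i+2}\). Then one of \(\mathbf z_{i\pm1}\) or \(\mathbf z_{i\pm2}\) satisfies \eqref{eq:splitting}, which is a contradiction. This is a small case check, which I will carry out by comparing \(u_{i-2},u_{i-1},u_{i+1},u_{i+2}\).

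For (iii), take a transition \(\{\mathbf z_{i-1},\mathbf z_i\}\) in part \(j\), and say \(\mathbf z_{i-1}\) is older. Either \(\mathbf z_{i-1}\) is interior to the part, or it is the endpoint \(\mathbf z_{n_{j-1}}\). In both cases it has a path neighbour in the part, namely \(\mathbf z_i\), which is younger. So it is not a local maximum of the part and therefore contributes. The symmetric case is handled the same way.

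For (ii) and (iv), note first that only splitting vertices belong to two parts, and each belongs to exactly the two parts meeting at it. A splitting vertex \(\mathbf z_{n_j}\) satisfies, say, \(u_{n_j}>u_{n_j-1}\) and \(u_{n_j}>u_{n_j-2}\). It is therefore a local maximum of the left part, so it contributes at most to the right part. If both alternatives in \eqref{eq:splitting} hold, it is a local maximum of both parts and contributes to neither. If only the second holds, the same reasoning applies with the roles of the parts reversed. This gives (ii). For (iv), let \(\mathbf z_r\) contribute to part \(j\), and let \(\mathbf z_{r\pm1}\) be a younger path neighbour. If \(\mathbf z_r\) is interior to the part, both neighbours and both transitions lie in part \(j\). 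If \(\mathbf z_r=\mathbf z_{n_j}\) is a splitting vertex contributing to part \(j\), then by the previous argument it is a local maximum of part \(j+1\). Hence its neighbour in part \(j+1\) is older, so every younger neighbour lies in part \(j\).

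The only delicate point is the counting in (i). There I expect a careful enumeration of the relative order of up to six consecutive birth times. The key fact I will use is that an unsplit local maximum must be older than both second neighbours, and I will try to derive the contradiction from that first.
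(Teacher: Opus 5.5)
Your arguments for (ii)--(iv) are correct and coincide with the paper's. A splitting vertex $\mathbf z_{n_j}$ with $0<j<k$ satisfies one alternative of \eqref{eq:splitting}, and is therefore a local maximum of the adjacent part on that side. The older endpoint of a transition has a younger path neighbour inside the part. In (iv) you still need the mirror case $r=n_{j-1}$ and the trivial cases $r\in\{0,n\}$, but these are routine.

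The gap is in (i), where neither of your two routes works as written.

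\textbf{The shape classification is wrong.} Take $n=3$ and $(u_0,u_1,u_2,u_3)=(0.8,0.2,0.5,0.9)$. Neither $1$ nor $2$ is a splitting index, so this is a single part with three transitions whose birth times go down, up, up. That pattern is not covered by your list.

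\textbf{The counting argument has two faulty steps.} First, the assertion that the youngest interior vertex $\mathbf z_i$ is younger than both neighbours does not follow when a neighbour is an endpoint $\mathbf z_{n_{j-1}}$ or $\mathbf z_{n_j}$ of the part, since you did not maximise over the endpoints. Second, the final step, that some $\mathbf z_{i\pm1}$ or $\mathbf z_{i\pm2}$ satisfies \eqref{eq:splitting}, is never derived. So the postponed enumeration is in fact the entire proof.

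\textbf{The missing observation.} The contradiction occurs at $\mathbf z_i$ itself and requires no local-maximum property. Suppose $n_j-n_{j-1}\ge5$, and let $i$ be the index with the largest birth time among the four nonsplitting indices $n_{j-1}+1,\dots,n_{j-1}+4$ only.
\begin{itemize}
\item If $i\ge n_{j-1}+3$, then $i-1$ and $i-2$ are among these four, so $u_i>u_{i-1}$ and $u_i>u_{i-2}$.
\item Otherwise $i+1$ and $i+2$ are among them, so $u_i>u_{i+1}$ and $u_i>u_{i+2}$.
\end{itemize}
Either way $i$ is a splitting index, a contradiction. This is the paper's proof.

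Your own second sentence already contradicts the choice of $\mathbf z_i$ when both neighbours of $\mathbf z_i$ are interior. In that case one of $\mathbf z_{i\pm2}$ is also among the four interior vertices, hence older than $\mathbf z_i$, whereas your sentence forces $\mathbf z_i$ to be older than it. You were one step from the argument, and no enumeration of six birth times is needed.
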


\begin{proof}
For (i), suppose that \(n_j-n_{j-1}\ge5\). The four indices
\(n_{j-1}+1,\ldots,n_{j-1}+4\) are not splitting indices. Let \(i\) be the one
among them with the largest birth time. If \(i\ge n_{j-1}+3\), then \(i-1\) and
\(i-2\) are among the four indices, and the first alternative in
\eqref{eq:splitting} holds at \(i\). Otherwise \(i+1\) and \(i+2\) are among
the four indices, and the second alternative holds. In both cases \(i\) is a
splitting index, which is a contradiction.

For (ii), note that \(\mathbf z_i\) with \(n_{j-1}<i<n_j\) belongs only to the
\(j\)th part, and that \(\mathbf z_0\) and \(\mathbf z_n\) belong to one part
each. A vertex \(\mathbf z_{n_j}\) with \(0<j<k\) belongs to the \(j\)th and
to the \((j+1)\)th part. If it contributed to both, it would be older than both
of its path neighbours, and neither alternative in \eqref{eq:splitting} could
hold at \(n_j\).

For (iii), the older endpoint of a transition is older than the other
endpoint, which is one of its path neighbours in the same part. Hence it is
not a local maximum of this part.

For (iv), if \(n_{j-1}<r<n_j\), then both path neighbours of \(\mathbf z_r\)
and both transitions at \(\mathbf z_r\) belong to the \(j\)th part. Suppose
that \(r=n_j\) with \(j<k\). The only path neighbour of \(\mathbf z_r\) in the
\(j\)th part is \(\mathbf z_{r-1}\). Since \(\mathbf z_r\) contributes to this
part, we have \(u_{r-1}>u_r\). Hence the first alternative in
\eqref{eq:splitting} fails at \(r\), so the second one holds, and
\(u_{r+1}<u_r\). Thus \(\mathbf z_{r+1}\) is older than \(\mathbf z_r\). The
case \(r=n_{j-1}\) with \(j>1\) is symmetric. For \(r\in\{0,n\}\), the only
path neighbour of \(\mathbf z_r\) belongs to the part.
\end{proof}

For a vertex \(\mathbf z_r\) which contributes to some part, let \(T_r\) be
the largest birth time of a path neighbour of \(\mathbf z_r\) that is younger
than \(\mathbf z_r\). For \(1\le j\le k\), let \(\Gamma_j\) be the set of
points \(\mathbf w=(w,v)\in X'\) which are children in \(G^{(0)}\) of some
vertex \(\mathbf z_r\) contributing to the \(j\)th part, with
\(u_r<v<T_r\). Let \(A_j\) be the event that every required transition of the
\(j\)th part is an edge of \(G^{(0)}\). By \Cref{lem:parts}(iii), the older
endpoint \(\mathbf z_r\) of a required transition of the \(j\)th part
contributes to this part. The edge is decided at the birth time of the
younger endpoint, which is at most \(T_r\), using the indegree of
\(\mathbf z_r\) before that time. The next lemma is the analogue of
\cite[Lemma~18]{JacobMorters2017}.

\begin{lemma}\label{lem:disjoint-witnesses}
Almost surely on \(\mathcal Q_S^{(0)}\), the sets
\(\Gamma_1,\ldots,\Gamma_k\) are pairwise disjoint.
\end{lemma}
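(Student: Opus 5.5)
We argue by contradiction: suppose \(\mathbf w=(w,v)\in\Gamma_j\cap\Gamma_{j'}\) with \(j\ne j'\). By definition, \(\mathbf w\) is a child in \(G^{(0)}\) of a vertex \(\mathbf z_r\) contributing to the \(j\)th part, with \(u_r<v<T_r\), and of a vertex \(\mathbf z_{r'}\) contributing to the \(j'\)th part, with \(u_{r'}<v<T_{r'}\). By \Cref{lem:parts}(ii), \(r\ne r'\). Both vertices are age-regular, and \(\mathbf w\in X'\) is younger than both, so \(\mathbf w\) is a common child of \(\mathbf z_r\) and \(\mathbf z_{r'}\) in \(G^{(0)}\). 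We split into cases according to \(|r-r'|\) and derive a contradiction with \(\mathcal Q_S^{(0)}\) in each.

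\emph{Case \(|r-r'|\ge3\), or \(|r-r'|=2\) with the middle vertex age-regular.} Condition (q2), read in \(G^{(0)}\), forbids any common child of \(\mathbf z_r\) and \(\mathbf z_{r'}\). This is an immediate contradiction.

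\emph{Case \(|r-r'|=2\) with the middle vertex \(\mathbf z_m\) a local birth-time maximum.} Condition (q2) says that no common child is older than \(\mathbf z_m\), so \(v>u_m\). Now \(\mathbf z_m\) is a path neighbour of \(\mathbf z_r\) that is younger than \(\mathbf z_r\). By \Cref{lem:parts}(iv), \(\mathbf z_m\) and the transition \(\{\mathbf z_r,\mathbf z_m\}\) lie in the \(j\)th part. Symmetrically, the transition \(\{\mathbf z_m,\mathbf z_{r'}\}\) lies in the \(j'\)th part. Let \(\mathbf z_b\) be the other path neighbour of \(\mathbf z_r\). The constraint \(v<T_r\) together with \(v>u_m\) forces \(T_r>u_m\). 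Hence \(T_r\) is the birth time of \(\mathbf z_b\), and \(\mathbf z_b\) is younger than \(\mathbf z_r\), with \(u_b>u_m\). The same reasoning on the other side gives a neighbour \(\mathbf z_{b'}\) of \(\mathbf z_{r'}\) with \(u_{b'}>u_m\).

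We then test whether \(m\) is a splitting index. The indices are \(b,r,m,r',b'\), consecutive with \(m\) in the middle, and \(u_m\) exceeds \(u_r\) and \(u_{r'}\). Splitting at \(m\) requires \(u_m>u_{m\mp2}=u_{b},u_{b'}\) for one of the alternatives, and both fail. So \(m\) is not a splitting index. This contradicts the two transitions at \(m\) lying in different parts \(j\ne j'\). One also has to handle boundary indices, where an inequality is omitted; there \(b\) or \(b'\) may fail to exist, and the constraint \(v<T_r\) then already contradicts \(v>u_m\).

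\emph{Case \(|r-r'|=1\).} Say \(r'=r+1\). Both vertices are age-regular and contribute to different parts, and the older of the two, say \(\mathbf z_r\), is an endpoint of the transition \(\{\mathbf z_r,\mathbf z_{r+1}\}\). By \Cref{lem:parts}(iii), this transition lies in the \(j\)th part, so \(r+1\) is the boundary index \(n_j\). Since \(\mathbf z_{r+1}\) is younger than \(\mathbf z_r\) and contributes to the \(j'\)th part, its other neighbour \(\mathbf z_{r+2}\) is younger still. Then \(T_{r+1}=u_{r+2}\), and \(u_{r+1}<v\). For \(\mathbf z_r\), we get \(T_r\ge u_{r+1}\), so \(v\) may lie in \((u_{r+1},T_r)\).

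Here we must use condition (q1) instead. Condition (q2) only concerns indices at distance at least \(2\), so it gives nothing. We check whether \(u_{r+1}\) must then be a splitting index, which it is, and look for a further contradiction, perhaps by showing that \(\mathbf z_{r+1}\) would be a local maximum of the \(j\)th part. Indeed, in the \(j\)th part its only neighbour is \(\mathbf z_r\), which is older, so \(\mathbf z_{r+1}\) \emph{is} a local maximum of that part. It therefore contributes only to part \(j'\), which is consistent. This case is the main obstacle. The resolution I would try is that the definition of \(\Gamma_j\) uses the window \((u_r,T_r)\), and one must show \(T_r<u_{r+1}\) is impossible, or else accept a common child of adjacent vertices. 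If that fails, I would re-examine whether the paper intends disjointness only up to adjacent overlaps, or whether (q1) excludes the overlap here.

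The "almost surely" qualifier would cover ties in birth times, which have probability zero.
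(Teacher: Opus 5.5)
Your cases \(|r-r'|\ge3\) and \(|r-r'|=2\) are correct. In the local-maximum subcase you show that \(u_b,u_{b'}>u_m\) makes both alternatives of \eqref{eq:splitting} fail at \(m\). This contradicts the two transitions at \(m\) lying in different parts. The paper argues the other way round: \(m\) must be a splitting index by \Cref{lem:parts}(ii), and whichever alternative holds gives \(T_r=u_m\) or \(T_{r'}=u_m\), contradicting \(v>u_m\). The content is the same.

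The gap is the adjacent case \(r'=r+1\), which you leave unresolved. The lemma does hold there, so no weakening to ``disjoint up to adjacent overlaps'' is needed. You have already shown that \(r+1\) is a splitting index and that \(u_{r+2}>u_{r+1}\), since \(\mathbf z_{r+1}\) contributes to the part starting at \(r+1\). The missing step is to feed this back into \eqref{eq:splitting}.

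\begin{itemize}
\item The second alternative at \(r+1\) requires \(u_{r+1}>u_{r+2}\), so it fails. Hence the first alternative holds: \(u_{r+1}>u_r\) and, if \(r\ge1\), \(u_{r+1}>u_{r-1}\).
\item So every younger path neighbour of \(\mathbf z_r\) is born no later than \(\mathbf z_{r+1}\), which gives \(T_r=u_{r+1}\) exactly, not merely \(T_r\ge u_{r+1}\).
\item Since \(\mathbf w\) is a child of \(\mathbf z_{r+1}\), we have \(v>u_{r+1}=T_r\), contradicting \(v<T_r\).
\end{itemize}

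If instead \(\mathbf z_{r+1}\) is the older vertex, the transition \(r+1\) lies in its part by \Cref{lem:parts}(iii). Then \(r\) is a splitting index and \(\mathbf z_r\) contributes to the part ending at \(r\), so \(u_{r-1}>u_r\). The first alternative therefore fails at \(r\), and the second gives \(u_r>u_{r+1}\) and (if \(r+2\le n\)) \(u_r>u_{r+2}\). Hence \(T_{r+1}=u_r<v\), again a contradiction.

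Condition (q1) plays no role here, since it concerns adjacency to prescribed vertices and \(\mathbf w\in X'\). Also, what must be excluded is \(T_r>u_{r+1}\), not \(T_r<u_{r+1}\). The splitting rule is designed precisely so that the window \((u_r,T_r)\) of the older endpoint closes when the shared younger neighbour is born.
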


\begin{proof}
Suppose that \(\mathbf w=(w,v)\in\Gamma_j\cap\Gamma_{j'}\) with \(j\ne j'\).
Then \(\mathbf w\) is a child in \(G^{(0)}\) of a vertex \(\mathbf z_r\)
contributing to the \(j\)th part, with \(u_r<v<T_r\), and of a vertex
\(\mathbf z_{r'}\) contributing to the \(j'\)th part, with
\(u_{r'}<v<T_{r'}\). By \Cref{lem:parts}(ii), \(r\ne r'\), and we may assume
that \(r<r'\). Both vertices are age-regular, and \(\mathbf w\) is a common
child of them.

Suppose first that \(r'=r+1\). By \Cref{lem:parts}(iii), the older of the two
vertices contributes to the part containing the transition \(r+1\). By
\Cref{lem:parts}(ii), the younger one contributes to a different part, so it
belongs to two parts and its index is a splitting index. Assume first that
\(\mathbf z_r\) is the older one. Then \(\mathbf z_{r+1}\) contributes to the
part which starts at \(r+1\), so \(u_{r+1}<u_{r+2}\). The second alternative
in \eqref{eq:splitting} fails at \(r+1\), so the first one holds. This gives
\(u_{r+1}>u_r\) and, if \(r\ge1\), \(u_{r+1}>u_{r-1}\). Hence
\(T_r=u_{r+1}\), and \(v<T_r\) contradicts \(v>u_{r+1}\). If
\(\mathbf z_{r+1}\) is the older one, the same argument with the two sides
exchanged gives \(T_{r+1}=u_r\), and \(v<T_{r+1}\) contradicts \(v>u_r\).

Suppose next that \(r'=r+2\). By (q2), the vertex \(\mathbf z_{r+1}\) is a
local birth-time maximum and every common child of \(\mathbf z_r\)
and \(\mathbf z_{r+2}\) is born at time at least \(u_{r+1}\). Since
\(\mathbf w\in X'\), its birth time differs almost surely from
\(u_{r+1}\). Hence we have \(v>u_{r+1}\). If \(r+1\) were not a splitting
index, then
\(\mathbf z_r\), \(\mathbf z_{r+1}\) and \(\mathbf z_{r+2}\) would belong to
one part. Both \(\mathbf z_r\) and \(\mathbf z_{r+2}\) are older than
\(\mathbf z_{r+1}\), so both would contribute to this part, and
\Cref{lem:parts}(ii) would give \(j=j'\). Hence \(r+1\) is a splitting index.
If the first alternative in \eqref{eq:splitting} holds at \(r+1\), then
\(u_{r+1}>u_r\) and, if \(r\ge1\), \(u_{r+1}>u_{r-1}\). This gives
\(T_r=u_{r+1}<v\), which is a contradiction. If the second alternative holds,
we obtain \(T_{r+2}=u_{r+1}<v\) in the same way.

Finally, if \(r'\ge r+3\), or if \(r'=r+2\) and \(\mathbf z_{r+1}\) is not a
local birth-time maximum, then (q2) excludes common children of
\(\mathbf z_r\) and \(\mathbf z_{r'}\).
\end{proof}

Next we bound the probability of the event \(A_j\) for a single part. The
bound holds for every choice of the positions, and it uses the regular
variation of \(\varphi\) through \eqref{eq:profile-envelope}.

\begin{lemma}\label{lem:part-bound}
There is a constant \(C_1\ge1\), depending only on \(\gamma\), \(a\), \(\eta\)
and \(c_\eta\), such that the following holds. If \(R_j\) denotes the set of
required transitions of the \(j\)th part, then
\[
  \PP_{\mathbf z_0,\ldots,\mathbf z_n}(A_j)
  \le
  C_1^{|R_j|}\prod_{i\in R_j}K(\mathbf z_{i-1},\mathbf z_i).
\]
\end{lemma}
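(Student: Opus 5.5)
The plan is to bound the probability of \(A_j\) by an expectation, over the indegree processes, of a product of single-edge connection probabilities. I then split each factor into a deterministic kernel part and a random degree ratio. The ratios are controlled by Hölder's inequality and \Cref{lem:degree-moments}; the number of factors is at most four by \Cref{lem:parts}(i), which keeps the constant uniform.

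\emph{Step 1: decoupling the marks.} For each required transition \(i\in R_j\), let \(\mathbf z_r\) be its older endpoint, which contributes to the \(j\)th part by \Cref{lem:parts}(iii). Let \(\mathbf z_{i'}\) be the younger endpoint, with birth time \(t_i\le T_r\), and let \(L_i\) be the spatial separation \(\|z_{i-1}-z_i\|^d\).

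The difficulty is that a vertex \(\mathbf z_r\) may be the older endpoint of two required transitions of the part. The first edge then raises its indegree at the time the second edge is decided. To remove this feedback, I introduce the graph \(G^{(1)}\) obtained from \(G^{(0)}\) by \emph{inserting} every pair in \(R_j\). On \(A_j\), these pairs are already edges of \(G^{(0)}\). \Cref{lem:comparison}(i) with \(T=1\) then shows that each contributing \(\mathbf z_r\) has the same indegree process in \(G^{(0)}\) and in \(G^{(1)}\). In \(G^{(1)}\), the indegree processes \(Z^{(1)}_{\mathbf z_r}\) are measurable with respect to \(X'\) and the marks other than \(\{V_{\mathbf z_{i-1},\mathbf z_i}:i\in R_j\}\). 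Hence, conditioning on everything except these marks and using their independence,
\[
  \PP(A_j)\le \EE\Bigl[\prod_{i\in R_j}\varphi\Bigl(\frac{t_iL_i}{\gamma\,(a+Z^{(1)}_{\mathbf z_r}(t_i-))}\Bigr)\Bigr].
\]
In \(G^{(1)}\), each \(\mathbf z_r\) receives at most two forced inedges from prescribed vertices, namely its younger path neighbours; pairs with \(|i-j|\ge2\) are deleted. So \Cref{lem:degree-moments} applies with \(M=2\).

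\emph{Step 2: splitting kernel and degree.} Write \(s\) for the birth time of \(\mathbf z_r\). By \eqref{eq:profile-envelope},
\[
  \varphi\Bigl(\frac{tL}{\gamma(a+Z)}\Bigr)\le c_\eta\bigl(1\wedge X\,q^{-\eta}\bigr),
  \qquad q=s^\gamma t^{1-\gamma}L,
  \qquad X=\gamma^\eta\bigl((a+Z)(s/t)^{\gamma}\bigr)^\eta.
\]
This follows from \(tL/(a+Z)=q\,(t/s)^\gamma/(a+Z)\). Then I use the elementary inequality \(1\wedge Xy\le (1+X)(1\wedge y)\) to bound each factor by \(c_\eta(1+X_i)K(\mathbf z_{i-1},\mathbf z_i)\). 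The kernels are deterministic and come out of the expectation.

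\emph{Step 3: moments.} It remains to show \(\EE\bigl[\prod_{i\in R_j}(1+X_i)\bigr]\le C^{|R_j|}\). With at most four factors, the generalized Hölder inequality reduces this to bounding \(\EE[(1+X_i)^{4}]\). That in turn needs \(\EE\bigl[(a+Z_{\mathbf z_r}^{(1)}(t_i-))^{4\eta}\bigr]\le C_{4\eta,2}(t_i/s)^{4\eta\gamma}\), which is exactly \Cref{lem:degree-moments} with \(r=4\eta\), \(M=2\), \(u=t_i\). Different vertices \(\mathbf z_r\) are correlated through \(X'\), but Hölder needs no independence.

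Collecting constants gives \(C_1=c_\eta\cdot C(\gamma,a,\eta)\), uniform in the positions. I expect the main obstacle to be Step 1. One must verify that inserting the required pairs keeps \(G^{(1)}\) within the class of graphs covered by \Cref{lem:comparison}. One must also check that, on \(A_j\), every child of \(\mathbf z_r\) before \(T_r\) is handled identically in both graphs, so that \Cref{lem:comparison}(i) applies; the children outside the prescribed set are decided by \eqref{eq:edge-rule} in both graphs with the same marks.
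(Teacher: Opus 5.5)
Your proposal is correct and follows the paper's proof essentially step by step. You insert the required transitions so that the indegrees agree with those of \(G^{(0)}\) on \(A_j\), reduce to \(\EE\bigl[\prod_{i\in R_j}p_i\bigr]\) by independence of the transition marks, bound each factor by a kernel times one plus a degree ratio, and close with H\"older over at most four factors together with \Cref{lem:degree-moments} for \(M=2\). The only differences (keeping \(c_\eta\) as an overall factor, and using fourth moments instead of \(|R_j|\)-th moments) change nothing but the value of \(C_1\).
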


\begin{proof}
Write \(\PP=\PP_{\mathbf z_0,\ldots,\mathbf z_n}\). If \(R_j=\varnothing\),
there is nothing to prove. Otherwise, let \(G^*\) be the graph obtained from
\(G^{(0)}\) by inserting the required transitions of the \(j\)th part,
irrespective of their marks. For \(i\in R_j\), let \(o(i)\) and \(y(i)\) be
the indices of the older and of the younger endpoint of the \(i\)th
transition, and put \(s_i=u_{o(i)}\) and \(t_i=u_{y(i)}\). Let \(Z_i^*\) be the
indegree of \(\mathbf z_{o(i)}\) in \(G^*\) at time \(t_i-\), and
\[
  p_i
  =
  \varphi\left(
    \frac{t_i\|z_{o(i)}-z_{y(i)}\|^d}{f(Z_i^*)}
  \right).
\]

Assume that \(A_j\) occurs. Then every inserted pair is already an edge of
\(G^{(0)}\), so it is an edge of both graphs. Deleted pairs are edges of
neither graph. Every other edge between a prescribed vertex and a younger
vertex is decided by \eqref{eq:edge-rule} in both graphs. By
\Cref{lem:comparison}(i), applied with \(T=1\) to every prescribed vertex, the
two graphs give every prescribed vertex the same indegree process. The edge rule \eqref{eq:edge-rule} in \(G^{(0)}\) therefore
gives \(V_{\mathbf z_{i-1},\mathbf z_i}\le p_i\) for every \(i\in R_j\). The
graph \(G^*\) does not depend on the marks \(V_{\mathbf z_{i-1},\mathbf z_i}\),
\(i\in R_j\), and these marks are independent and uniform on \([0,1]\). Hence
\begin{equation}\label{eq:part-tower}
  \PP(A_j)
  \le
  \PP\left(V_{\mathbf z_{i-1},\mathbf z_i}\le p_i\text{ for all }i\in R_j\right)
  =
  \EE\Big[\prod_{i\in R_j}p_i\Big].
\end{equation}

Next we bound each \(p_i\). Put
\[
  H_i=q(\mathbf z_{o(i)},\mathbf z_{y(i)})
  =s_i^\gamma t_i^{1-\gamma}\|z_{o(i)}-z_{y(i)}\|^d,
  \qquad
  D_i=f(Z_i^*)\left(\frac{s_i}{t_i}\right)^\gamma .
\]
Then the argument of \(\varphi\) in \(p_i\) equals \(H_i/D_i\). If
\(H_i\le1\), then \(p_i\le1=K(\mathbf z_{i-1},\mathbf z_i)\). If \(H_i>1\),
then \eqref{eq:profile-envelope} gives
\(p_i\le c_\eta D_i^\eta H_i^{-\eta}=c_\eta D_i^\eta
K(\mathbf z_{i-1},\mathbf z_i)\). In both cases
\begin{equation}\label{eq:pi-bound}
  p_i\le K(\mathbf z_{i-1},\mathbf z_i)\big(1+c_\eta D_i^\eta\big).
\end{equation}

It remains to bound the moments of \(D_i\). Let \(r=o(i)\). In \(G^*\), the
vertex \(\mathbf z_r\) receives inedges from points of \(X'\) and from at most
two prescribed vertices, namely its path neighbours, since all other pairs
with prescribed vertices are deleted. An edge from \(\mathbf z_{r\pm1}\) is
either inserted or decided by its mark at the deterministic time
\(u_{r\pm1}\). It is therefore a unit jump at a stopping time, which equals
\(u_{r\pm1}\) if the edge is present and \(+\infty\) otherwise. Conditional on
the past, a point of \(X'\) born at time \(v\) is joined to \(\mathbf z_r\)
independently, and by \eqref{eq:profile-normalization} the points of \(X'\)
produce inedges of \(\mathbf z_r\) at rate
\(f(Z_{\mathbf z_r}(v-))/v\), as at the beginning of this section. Hence
\Cref{lem:degree-moments} applies with \(M=2\). Since
\(f(k)=\gamma(a+k)\) and the indegree is nondecreasing, we obtain, for every
\(\ell\in\{1,2,3,4\}\),
\[
  \EE\big[D_i^{\eta\ell}\big]
  \le
  \gamma^{\eta\ell}\left(\frac{s_i}{t_i}\right)^{\gamma\eta\ell}
  \EE\Big[\big(a+Z^{[2]}_{\mathbf z_r}(t_i)\big)^{\eta\ell}\Big]
  \le
  \gamma^{\eta\ell}C_{\eta\ell,2}.
\]
Put
\(C_2=\max_{1\le\ell\le4}\big(\gamma^{\eta\ell}C_{\eta\ell,2}\big)^{1/\ell}\)
and \(\ell=|R_j|\), so that \(\ell\le4\) by \Cref{lem:parts}(i). By
\eqref{eq:part-tower}, \eqref{eq:pi-bound}, H\"older's inequality with
\(\ell\) factors and Minkowski's inequality,
\[
  \PP(A_j)
  \le
  \prod_{i\in R_j}K(\mathbf z_{i-1},\mathbf z_i)
  \prod_{i\in R_j}
  \Big(1+c_\eta\,\EE\big[D_i^{\eta\ell}\big]^{1/\ell}\Big)
  \le
  (1+c_\eta C_2)^{\ell}
  \prod_{i\in R_j}K(\mathbf z_{i-1},\mathbf z_i).
\]
This proves the claim with \(C_1=1+c_\eta C_2\).
\end{proof}

We are now ready to prove \Cref{prop:quick-path}.

\begin{proof}[Proof of \Cref{prop:quick-path}]
Write \(\PP=\PP_{\mathbf z_0,\ldots,\mathbf z_n}\). By
\Cref{lem:suppression}, \(\PP(\mathcal Q_S)\le\PP(\mathcal Q_S^{(0)})\). The
condition on the short transitions in (q0) is deterministic, and
\(\1\{q(\mathbf z_{i-1},\mathbf z_i)\le1\}\le K(\mathbf z_{i-1},\mathbf z_i)\)
for \(i\in S\). Since \(C_1^{|R|}\le C_1^n\), it therefore suffices to show
that
\begin{equation}\label{eq:quick-reduced}
  \PP\big(\mathcal Q_S^{(0)}\big)
  \le
  C_1^{|R|}\prod_{i\in R}K(\mathbf z_{i-1},\mathbf z_i).
\end{equation}
Then the proposition holds with \(C_{\mathrm{qp}}=C_1\).

Let \(\Upsilon\) be the family of marks
\(V_{\mathbf z_{i-1},\mathbf z_i}\), \(1\le i\le n\). The marks of the deleted
pairs do not enter \(G^{(0)}\). Hence, conditional on \(\Upsilon\), the
events \(\mathcal Q_S^{(0)}\) and \(A_1,\ldots,A_k\) are determined by the
marked point process \(\{(\mathbf w,m(\mathbf w)):\mathbf w\in X'\}\). Under
\(\PP\) and conditional on \(\Upsilon\), this is a Poisson point process of
unit intensity on \(\R^d\times(0,1]\) with independent marks, uniformly
distributed on \([0,1]^{n+1}\).

In order to apply the BK inequality on a bounded domain, let
\(\Lambda_L=[-L,L]^d\times(0,1]\) for \(L>0\), and let \(A_j^L\) be the event
\(A_j\) for the graph built from the prescribed vertices and
\(X'\cap\Lambda_L\), with the same deleted pairs. A required transition is an
edge if and only if its younger endpoint is a child of its older endpoint. By
\Cref{lem:comparison}(ii), \(A_j^L\) is therefore an increasing event of the
marked point process on \(\Lambda_L\), and \(A_j^L\subseteq A_j\).

Assume that \(\mathcal Q_S^{(0)}\) occurs. The sets \(\Gamma_j\) are finite
almost surely, and by \Cref{lem:disjoint-witnesses} they are pairwise
disjoint. Let \(L\) be so large that \(\Lambda_L\) contains
\(\Gamma_1\cup\cdots\cup\Gamma_k\). Fix \(j\), and consider the graph built
from the prescribed vertices and \(\Gamma_j\) only. Let \(\mathbf z_r\)
contribute to the \(j\)th part. Every child of \(\mathbf z_r\) in
\(G^{(0)}\) born in \((u_r,T_r)\) is either a point of \(\Gamma_j\) or a path
neighbour of \(\mathbf z_r\). The same holds in the graph built from
\(\Gamma_j\), and all vertices of this graph are vertices of \(G^{(0)}\). The
edges between \(\mathbf z_r\) and these vertices are decided by
\eqref{eq:edge-rule} in both graphs. By \Cref{lem:comparison}(i), the
indegree processes of \(\mathbf z_r\) in the two graphs agree on
\([u_r,T_r)\). By \Cref{lem:parts}(iii), the older endpoint \(\mathbf z_r\)
of a required transition of the \(j\)th part contributes to this part. The
transition is decided at a time \(t\le T_r\), using the indegree of
\(\mathbf z_r\) at time \(t-\) and the mark in \(\Upsilon\). Hence \(A_j^L\)
occurs for the graph built from \(\Gamma_j\). We have shown that
\[
  \mathcal Q_S^{(0)}
  \cap
  \{\Gamma_1\cup\cdots\cup\Gamma_k\subseteq\Lambda_L\}
  \subseteq
  A_1^L\circ\cdots\circ A_k^L ,
\]
where \(\circ\) denotes disjoint occurrence. The BK inequality for marked
Poisson point processes on bounded domains of van den
Berg~\cite{VanDenBerg1996}, in the form used in
\cite[Section~6.2]{JacobMorters2017}, and the inclusion
\(A_j^L\subseteq A_j\) give
\[
\begin{aligned}
  \PP\Big(\mathcal Q_S^{(0)}\cap
  \{\Gamma_1\cup\cdots\cup\Gamma_k\subseteq\Lambda_L\}
  \,\Big|\,\Upsilon\Big)
  &\le
  \PP\big(A_1^L\circ\cdots\circ A_k^L\,\big|\,\Upsilon\big)\\
  &\le
  \prod_{j=1}^k\PP\big(A_j^L\,\big|\,\Upsilon\big)
  \le
  \prod_{j=1}^k\PP\big(A_j\,\big|\,\Upsilon\big).
\end{aligned}
\]
Letting \(L\to\infty\), we obtain
\(\PP(\mathcal Q_S^{(0)}\mid\Upsilon)\le\prod_{j=1}^k\PP(A_j\mid\Upsilon)\).

The event \(A_j\) depends on the indegree processes of the vertices
contributing to the \(j\)th part before the times \(T_r\). By
\Cref{lem:parts}(iv), and since the older path neighbours of a vertex are
never its children, these indegree processes depend on \(\Upsilon\) only
through the marks of the transitions of the \(j\)th part. The same holds for
the marks of the required transitions themselves. Since every transition
belongs to exactly one part, the random variables
\(\PP(A_j\mid\Upsilon)\), \(1\le j\le k\), are independent. Taking
expectations gives
\[
  \PP\big(\mathcal Q_S^{(0)}\big)
  \le
  \prod_{j=1}^k\PP(A_j).
\]
By \Cref{lem:part-bound} and \(\sum_{j=1}^k|R_j|=|R|\), this proves
\eqref{eq:quick-reduced}.
\end{proof}

\section{The separated backbone}\label{sec:backbone}

After constructing \(G^\infty\), add deterministic short edges by setting
\[
  \{\mathbf x,\mathbf y\}\in E(\whG)
  \quad\Longleftrightarrow\quad
  \{\mathbf x,\mathbf y\}\in E(G^\infty)
  \text{ or }q(\mathbf x,\mathbf y)\le1.
\]
The added edges do not affect any degree process.

\begin{lemma}\label{lem:short-local-finite}
The deterministic short-edge graph is locally finite. Under the root Palm law
with root \((0,U)\), \(U\) uniform on \((0,1]\), its expected short degree is
finite.
\end{lemma}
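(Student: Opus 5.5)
We compute the expected short degree directly with Mecke's formula and then deduce local finiteness from it. For a vertex \(\mathbf x=(x,s)\), its short neighbours are the points \(\mathbf y=(y,t)\in\cX\) with \(q(\mathbf x,\mathbf y)=g(s,t)\|x-y\|^d\le1\). For fixed \(t\), the spatial section of this set is a ball of volume \(v_d\,g(s,t)^{-1}\). By the Mecke (Slivnyak) formula under the Palm law at \(\mathbf x\), the expected number of short neighbours is therefore
\[
  \int_0^1 v_d\,g(s,t)^{-1}\dd t
  = v_d\Big(s^{-(1-\gamma)}\int_0^s t^{-\gamma}\dd t
      + s^{-\gamma}\int_s^1 t^{-(1-\gamma)}\dd t\Big).
\]
Both integrals converge since \(\gamma\in(0,1)\). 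They give
\[
  \frac{v_d}{1-\gamma}+\frac{v_d}{\gamma}\,s^{-\gamma}\big(1-s^\gamma\big)
  \le v_d\Big(\frac1{1-\gamma}+\frac{s^{-\gamma}}{\gamma}\Big).
\]

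For the root \((0,U)\) with \(U\) uniform on \((0,1]\), we average over \(s=U\). Since \(\int_0^1 s^{-\gamma}\dd s=1/(1-\gamma)<\infty\), the expected short degree is at most \(v_d\big(\tfrac1{1-\gamma}+\tfrac1{\gamma(1-\gamma)}\big)<\infty\). This is the second claim.

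For local finiteness we need the short degree of every vertex of \(\cX\) to be finite almost surely. Apply the Mecke formula once more, to the nonnegative functional \(\1\{\text{short degree of }\mathbf x=\infty\}\) summed over \(\mathbf x\in\cX\cap(B\times(0,1])\) for a bounded set \(B\). Its expectation equals \(\int_B\int_0^1\PP_{(x,s)}(\text{short degree}=\infty)\dd s\dd x\). For every \(s>0\) the Palm expectation computed above is finite, so the short degree is finite \(\PP_{(x,s)}\)-almost surely and this integral vanishes. Taking a countable exhaustion of \(\R^d\) by bounded sets shows that almost surely every vertex has finitely many short neighbours. This is the first claim.

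The only point needing care is that the bound \(s^{-\gamma}\) blows up for old vertices, so the expected short degree of a single vertex is not uniformly bounded. It is nevertheless finite for each fixed \(s>0\), which is all local finiteness requires, and it is integrable in \(s\), which gives the root statement.
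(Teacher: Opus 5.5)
Your proposal is correct and follows the paper's proof essentially step for step. Both split the Mecke integral into older and younger neighbours, obtaining \(v_d/(1-\gamma)\) and \(\frac{v_d}{\gamma}(s^{-\gamma}-1)\); both use integrability of \(s^{-\gamma}\) at \(s=U\) for the root statement; and both deduce local finiteness from Mecke's formula over a countable cover of \(\R^d\).
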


\begin{proof}
Fix \(\mathbf x=(x,s)\). For younger vertices \(t\ge s\), the short-edge
condition gives a ball of volume \(v_d s^{-\gamma}t^{\gamma-1}\), hence mean
\[
  v_d s^{-\gamma}\int_s^1t^{\gamma-1}\dd t
  =
  \frac{v_d}{\gamma}(s^{-\gamma}-1).
\]
For older vertices \(t<s\), the mean is
\[
  v_d s^{\gamma-1}\int_0^s t^{-\gamma}\dd t
  =
  \frac{v_d}{1-\gamma}.
\]
Both are finite for fixed \(s\), and the first expression is integrable when
\(s=U\), because \(\EE[U^{-\gamma}]<\infty\).

For a fixed deterministic \(\mathbf x\) the set of short neighbours has finite
Poisson intensity, and hence is finite almost surely under the Palm law at
\(\mathbf x\). By Mecke's formula, the expected number of Poisson vertices in
any unit spatial cube whose short degree is infinite is therefore zero.
Summing over the countable collection of integer translates gives local
finiteness of the short-edge graph almost surely. The expected Palm degree
statement follows from the same displayed formulae and the integrability in
\(U\).
\end{proof}

We now explain how the Jacob--M\"orters quick-path construction
and the skeleton reduction of Gracar, L\"uchtrath and
M\"orters~\cite{GracarLuechtrathMorters2021} are
combined.
The quick-path estimate is applied first, not to the final skeleton but to an
expanded finite segment of a retained backbone. In this segment, consecutive
retained backbone vertices may be joined by an actual edge, by a deterministic
short transition, or by inserting one actual common child. These inserted
children are then integrated out before skeletonising. The skeleton is
extracted only afterwards,
by deleting local birth-time maxima among the retained backbone vertices.
Thus the skeleton vertices form a subsequence of the retained backbone, while
the nonskeleton backbone vertices form connector blocks between consecutive
skeleton vertices. The separation enforced by the quickened backbone allows
us to remove every inserted child and every nonskeleton local maximum by the
same two-edge contraction.

\begin{proposition}\label{prop:backbone}
If a retained vertex belongs to an infinite component of \(\pwhG^\infty\),
then there is an infinite retained backbone
\[
  \mathbf y_0,\mathbf y_1,\ldots
\]
with the following properties.
\begin{enumerate}[label=(\roman*)]
\item Consecutive backbone vertices are connected either by an actual edge of
  \(G^\infty\), by a deterministic short edge of \(\whG\), or through one
  inserted actual common child.
\item Nonconsecutive backbone vertices are not adjacent in \(\whG\) and have
  no actual common child in \(G^\infty\). In particular,
  \(q(\mathbf y_i,\mathbf y_j)>1\) for \(j\ge i+2\).
\item After inserting the common children, the expanded sequence is an
  augmented quick path, with the deterministic short edges as short
  transitions, to which \Cref{prop:quick-path}
  applies.
\item For every finite backbone segment, first integrating the inserted common
  children with \Cref{lem:static-contraction} and then deleting local
  birth-time maxima of the retained backbone preserves the separation
  \(q>1\) at every deletion.
\end{enumerate}
\end{proposition}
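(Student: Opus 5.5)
We will build the backbone from a geodesic ray and then quicken it. Since \(\pwhG^\infty\) is locally finite (\Cref{lem:degree-moments} for actual degrees, \Cref{lem:short-local-finite} for short degrees), an infinite retained component containing the given vertex \(\mathbf y_0\) contains, by König's lemma, an infinite self-avoiding ray. We take a ray that is \emph{geodesic} in the retained graph of \(\whG\): every finite subsegment is a shortest path in \(\pwhG^\infty\). A geodesic ray has no chords. Hence nonconsecutive vertices \(\mathbf y_i,\mathbf y_j\), \(j\ge i+2\), are not adjacent in \(\whG\), and in particular \(q(\mathbf y_i,\mathbf y_j)>1\). The backbone will consist of retained vertices only, so that its first moment later carries a factor \(p\) per vertex.

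The ray must also have no common children between nonconsecutive vertices. Common children may be non-retained, so the geodesic property alone does not exclude them. We therefore quicken the ray in the spirit of \cite[Section~6.1]{JacobMorters2017}. Whenever \(\mathbf y_i\) and \(\mathbf y_j\), with \(j\ge i+2\), have an actual common child \(\mathbf w\) in \(G^\infty\), we replace the segment \(\mathbf y_i,\ldots,\mathbf y_j\) by the single two-step connection \(\mathbf y_i,\mathbf w,\mathbf y_j\), and record \(\mathbf w\) as an inserted child. We process the indices greedily: from \(\mathbf y_i\) we jump to the largest \(j\) available through either an edge of \(\whG\) or a common child, and we choose the oldest such common child.

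The jump index is finite by local finiteness: \(\mathbf y_i\) has finitely many neighbours and children, and each child has finitely many neighbours. The infinite retained ray passes through only finitely many vertices of any bounded set. So the procedure defines an infinite backbone \(\mathbf y_0=\mathbf y'_0,\mathbf y'_1,\ldots\). By maximality of each jump, property (i) holds, and so does property (ii): a chord or a common child between nonconsecutive backbone vertices would have allowed a longer jump. Some care is needed to show that later jumps cannot create violations with earlier vertices. We handle this as in Jacob--Mörters, by noting that any violation between \(\mathbf y'_a\) and \(\mathbf y'_b\) with \(b\ge a+2\) contradicts the maximal choice at step \(a\).

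For (iii) we form the expanded sequence by inserting each common child \(\mathbf w\) between its two backbone neighbours. We declare as short exactly those transitions realised only by a deterministic short edge, and we check (q0)--(q2) of \Cref{def:augmented-quick}. (q0) holds by construction. Inserted children are younger than both neighbours, so they are local birth-time maxima. A backbone vertex \(\mathbf y'_a\) is age-regular if it is older than one of its expanded neighbours.

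For (q1), suppose an age-regular \(\mathbf y'_a\) were adjacent to a younger sequence vertex other than its path neighbours. That vertex is either a backbone vertex, which is excluded by (ii), or an inserted child of a different pair, which would be a common child of \(\mathbf y'_a\) and a nonconsecutive backbone vertex and would allow a longer jump.

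For (q2), two age-regular vertices at expanded distance at least \(2\) are either nonconsecutive backbone vertices, handled by (ii), or a consecutive backbone pair separated by an inserted child. In the latter case the oldest-child choice gives exactly the clause ``no common child is older than the intermediate vertex''. Since consecutive vertices may also be joined by a direct edge, we do not insert a child when a direct or short edge exists.

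I expect (q1)/(q2) to be the main obstacle. The subtle points are, first, the interaction between age-regularity and inserted children, and second, the fact that a backbone vertex which is a local maximum in the backbone may become age-regular or not after insertion. I will treat these by a case analysis on the birth-time order of three consecutive expanded vertices.

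For (iv) we fix a finite segment. The quick-path bound from \Cref{prop:quick-path} gives a product of kernels \(K\) along the expanded sequence. Each inserted child \(\mathbf w=(z,u)\) is younger than both neighbours, and the two neighbours satisfy \(q>1\) because they are nonconsecutive in the expanded sequence, using (ii) in the reduced sense. Hence \Cref{lem:static-contraction} integrates \(\mathbf w\) out and replaces the two edges by \((C/\kappa)K\) between its neighbours.

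After this, the retained backbone carries a kernel between consecutive vertices, and any two of its vertices at distance at least \(2\) satisfy \(q>1\) by (ii). We then delete local birth-time maxima one at a time, each time applying \Cref{lem:static-contraction} to the maximum and its two current neighbours. After a deletion the new neighbours are original backbone vertices that were nonconsecutive, so their separation \(q>1\) still holds by (ii). This invariant is preserved throughout the induction, which proves (iv).
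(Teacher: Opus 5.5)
Your proposal is correct and follows essentially the same route as the paper. Both take a geodesic ray in \(\pwhG^\infty\), quicken it by jumping to the largest index sharing an actual common child, and insert the oldest common child exactly between consecutive backbone vertices that are not adjacent in \(\whG\); (q1), (q2) and the separations in (iv) then follow from (ii) together with this insertion rule.

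The points to make explicit, as the paper does, are these. First, the inserted children are distinct from each other and from the backbone vertices, since a coincidence would produce an actual edge or common child between nonconsecutive backbone vertices, contradicting (ii); \Cref{prop:quick-path} needs distinct points. Second, two facts come from the insertion rule rather than from (ii): the separation \(q>1\) between the two endpoints of an inserted child, and, in (q1), the non-adjacency of a consecutive backbone pair separated by an inserted child.
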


\begin{proof}
The graph \(\pwhG^\infty\) is locally finite: actual degrees are finite
almost surely \cite[Section~4]{JacobMorters2017}, and short degrees are
finite by \Cref{lem:short-local-finite}. An infinite component of a locally
finite graph contains a geodesic ray \(\mathbf x_0,\mathbf x_1,\ldots\)
started at any of its vertices; this follows from a diagonal argument, cf.\
\cite[Section~6]{JacobMorters2017}. Geodesicity forbids an edge in \(\whG\)
between \(\mathbf x_i\) and \(\mathbf x_j\) whenever \(j\ge i+2\), and
therefore gives \(q(\mathbf x_i,\mathbf x_j)>1\).

We quicken this ray to remove nonconsecutive common children. Put
\(\psi(0)=0\). Given \(\psi(n)\), let
\[
  \cN_n=\{k>\psi(n):
  \mathbf x_{\psi(n)}\text{ and }\mathbf x_k
  \text{ have an actual common child in }G^\infty\}.
\]
This set is finite, because every such common child is an actual neighbour of
\(\mathbf x_{\psi(n)}\), and all actual degrees are finite. If
\(\cN_n\ne\varnothing\), set \(\psi(n+1)=\max\cN_n\); otherwise set
\(\psi(n+1)=\psi(n)+1\). Define \(\mathbf y_n=\mathbf x_{\psi(n)}\). Then
the sequence is infinite and retained. If \(j\ge i+2\), then
\(\psi(j)>\psi(i+1)\), so \(\mathbf y_i\) and \(\mathbf y_j\) cannot have a
common child by maximality of \(\psi(i+1)\). They also cannot be adjacent in
\(\whG\), since their indices on the original geodesic differ by at least two.

Consecutive backbone vertices are adjacent in \(\whG\) if and only if they
were consecutive on the geodesic; in the maximal-jump case they share an
actual common child, and both can occur simultaneously, namely when
\(\max\cN_n=\psi(n)+1\). Between every pair of consecutive backbone
vertices that are not adjacent in \(\whG\), and only between such pairs, we
insert the oldest common child. Such pairs arise only from a maximal jump
\(\psi(n+1)\ge\psi(n)+2\), so the required common child
exists.

The inserted children are distinct and are not backbone vertices. If an
inserted child coincided with a backbone vertex other than its two endpoints,
this backbone vertex would be adjacent in \(G^\infty\) to a nonconsecutive
backbone vertex. If it coincided with another inserted child, it would be a
common child of two nonconsecutive backbone vertices. Both contradict the
properties established above. Every inserted child is
younger than both adjacent backbone endpoints, hence is a local birth-time
maximum in the expanded sequence. The usual quick-path exclusions for
age-regular vertices follow from the absence of nonconsecutive augmented edges
and nonconsecutive common children. For (q2), if the intermediate
local maximum is an inserted child, it is the oldest actual common child by
construction. If it is a retained backbone vertex, its two neighbours are
nonconsecutive backbone vertices and have no actual common child. Thus
\Cref{prop:quick-path} applies.

For a finite segment, integrate inserted common children before
skeletonising. Whenever a child was inserted, its two backbone endpoints are
not adjacent in \(\whG\), hence separated by \(q>1\), so
\Cref{lem:static-contraction} contracts the two child kernels to one endpoint
kernel. After this first stage, only the retained backbone remains. Delete
interior local birth-time maxima, always choosing the largest birth time among
the current local maxima. If \(\mathbf y_k\) is deleted and its current
neighbours have original indices \(i<k<j\), then \(j-i\ge2\). The separation
of nonconsecutive backbone vertices gives \(q(\mathbf y_i,\mathbf y_j)>1\),
so \Cref{lem:static-contraction} applies at every deletion.
\end{proof}

\begin{corollary}\label{cor:reduced-backbone}
There is \(B_d<\infty\) such that the following holds under the retained root
Palm law. Fix a backbone with \(n\) transitions and suppose its skeleton has
\(m\) transitions, with block lengths
\[
  r_1+\cdots+r_m=n,\qquad r_i\ge1.
\]
For prescribed skeleton vertices \(\mathbf s_0,\ldots,\mathbf s_m\), after
summing transition labels and ordered binary-tree connector shapes and after
integrating inserted common children and nonskeleton retained backbone
vertices, the resulting density is bounded by
\[
  (B_dp)^n
  \prod_{j=1}^mK(\mathbf s_{j-1},\mathbf s_j).
\]
\end{corollary}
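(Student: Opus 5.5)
The plan is to start from the expanded sequence of \Cref{prop:backbone}(iii), namely the $n+1$ retained backbone vertices together with the inserted common children. We bound its Palm probability by \Cref{prop:quick-path}:
\[
  C_{\mathrm{qp}}^{n'}\prod K
\]
over all expanded transitions. Here $n'\le 2n$, since each backbone transition contributes either one transition (actual or short) or two (through an inserted child). Retention contributes a factor $p$ for each retained backbone vertex other than the root, giving $p^n$; the root is fixed by the retained Palm law. Inserted children are not required to be retained, so they carry no factor of $p$.

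For each backbone transition, the type label (actual edge, short edge, or inserted child) ranges over at most three values. Summing the labels therefore costs $3^n$. For a label with an inserted child, we integrate the child $(z,u)$ over $u\in(\max(s,t),1]$ and $z\in\R^d$, using Mecke's formula, where $s,t$ are the birth times of the two endpoints. By \Cref{prop:backbone}(ii) the endpoints satisfy $q>1$. \Cref{lem:static-contraction} then replaces the two child kernels by $(C_{d,\eta}/\kappa)K$ of the endpoints. After this stage the density over the $n+1$ backbone vertices is at most
\[
  \bigl(3C_{\mathrm{qp}}^2(1\vee C_{d,\eta}/\kappa)\bigr)^n p^n
  \prod_{i=1}^nK(\mathbf y_{i-1},\mathbf y_i).
\]

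Next we integrate the nonskeleton backbone vertices, following the deletion order of \Cref{prop:backbone}(iv). At each step we remove the current interior local birth-time maximum with the largest birth time. Its birth time exceeds those of its two current neighbours, and those neighbours are nonconsecutive in the original backbone, so $q>1$. Integrating its position and birth time over the range above the younger neighbour is therefore exactly the setting of \Cref{lem:static-contraction}. That lemma merges two kernels into one at cost $C_{d,\eta}/\kappa$. We need two facts about the integration order.
\begin{itemize}
  \item The deletion order is determined by the birth-time ordering. We fix an ordered binary-tree connector shape, which encodes the nesting of deletions within each block. For that shape the integrals are taken in the prescribed order, and each removed vertex ranges over a region contained in the one required by the lemma.
  \item The nested contractions within block $i$ eventually leave the single kernel $K(\mathbf s_{i-1},\mathbf s_i)$.
\end{itemize}
These two facts are where the main care is needed. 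The number of ordered binary-tree shapes for a block of length $r_i$ is at most a Catalan number, hence at most $4^{r_i}$, so summing shapes costs $4^n$. Each deletion costs $C_{d,\eta}/\kappa$, and there are $n-m\le n$ deletions. Collecting all constants into $B_d$, which is independent of $n$, $m$ and $p$, gives the claimed bound $(B_dp)^n\prod_{j}K(\mathbf s_{j-1},\mathbf s_j)$.

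I expect the main obstacle to be justifying that each integration region matches \Cref{lem:static-contraction}. The separation $q>1$ must hold between the current neighbours at the moment of deletion, with the skeleton vertices held fixed. This is guaranteed by (ii) of \Cref{prop:backbone}: any two backbone vertices at original index distance at least $2$ are separated. The remaining point is that enlarging each vertex's integration domain from its constrained set to the full range $(\text{younger neighbour's time},1]\times\R^d$ only increases the bound. This holds because all integrands are nonnegative.
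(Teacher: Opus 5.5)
Your proposal is correct and follows the paper's proof essentially step for step: the quick-path bound on the expanded sequence with at most $2n$ transitions, integrating the inserted children first via \Cref{lem:static-contraction}, then youngest-first local-maximum deletion for a fixed ordered binary-tree shape in each block with the Catalan count, and collecting the factors $3^n$ and $p^n$. One small correction: the separation $q>1$ between the two endpoints of an inserted child does not follow from \Cref{prop:backbone}(ii), which concerns nonconsecutive backbone vertices; it comes from the construction, where a child is inserted only between consecutive backbone vertices that are not adjacent in $\whG$, as used in \Cref{prop:backbone}(iv).
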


\begin{proof}
Let \(C_{\mathrm{con}}\ge1\) be a constant for
\Cref{lem:static-contraction}. Fix the transition labels and the ordered
binary-tree shape in every connector block. Suppose \(h\le n\) backbone
transitions are represented by an inserted common child. The expanded
augmented quick path has \(n+h\) transitions, so
\Cref{prop:quick-path} gives
\[
  C_{\mathrm{qp}}^{n+h}
  \prod K
\]
over the expanded sequence; deterministic short transitions contribute only
their indicators \(q\le1\), bounded by the corresponding \(K\)-factor.

Integrate the \(h\) inserted common children first, dropping the constraint
that such a child is the oldest common child; this only enlarges the witness
set. Their two adjacent backbone endpoints are separated by \(q>1\), so each
integration contributes a factor at most \(C_{\mathrm{con}}\) and leaves one
endpoint kernel.
After this step the kernel product lies on the retained backbone. Now fix the
skeleton. There are
\(n-m\) nonskeleton retained backbone vertices. For fixed ordered tree shapes,
successive local-maximum deletion contributes a factor at most
\(C_{\mathrm{con}}^{n-m}\) and leaves
\(\prod_{j=1}^mK(\mathbf s_{j-1},\mathbf s_j)\). By the encoding of connector blocks by ordered binary trees of
\cite[Section~2]{GracarLuechtrathMorters2021} and the Catalan bound, the
number of ordered binary-tree shapes in the connector blocks is bounded
by
\[
  \prod_{i=1}^m4^{r_i-1}=4^{n-m}.
\]
There are at most three labels per backbone transition, and the \(n\) non-root
backbone vertices are all retained, giving the full factor \(p^n\).
Therefore the contribution is bounded by
\[
  3^n4^{n-m}p^n
  C_{\mathrm{qp}}^{n+h}
  C_{\mathrm{con}}^{h+n-m}
  \prod_{j=1}^mK(\mathbf s_{j-1},\mathbf s_j)
  \le
  (12C_{\mathrm{qp}}^2C_{\mathrm{con}}^2p)^n
  \prod_{j=1}^mK(\mathbf s_{j-1},\mathbf s_j).
\]
Thus the claim holds with \(B_d=12C_{\mathrm{qp}}^2C_{\mathrm{con}}^2\).
\end{proof}

\section{Skeleton first moment}\label{sec:skeleton}

The diffuse birth-time coordinate implies that, almost surely, the skeleton
birth times have a unique oldest vertex. If this vertex has index \(k\), then
\[
  t_0>t_1>\cdots>t_k<t_{k+1}<\cdots<t_m .
\]
For \(L>0\), let \(\mathcal D_{m,k}(L)\) be the set of such birth-time
vectors with \(t_k\ge e^{-L}\), and set
\[
  T_{m,k}(L)
  :=
  \int_{\mathcal D_{m,k}(L)}
  \prod_{j=1}^m g(t_{j-1},t_j)^{-1}
  \dd t_0\cdots\dd t_m .
\]
For an interior minimum \(1\le k\le m-1\), the product telescopes to
\[
  \prod_{j=1}^m g(t_{j-1},t_j)^{-1}
  =
  t_0^{\gamma-1}t_m^{\gamma-1}t_k^{-2\gamma}
  \prod_{\substack{1\le j\le m-1\\j\ne k}}t_j^{-1}.
\]
At the endpoints,
\[
  \prod_{j=1}^m g(t_{j-1},t_j)^{-1}
  =
  \begin{cases}
  t_0^{-\gamma}t_m^{\gamma-1}
  \prod_{j=1}^{m-1}t_j^{-1},& k=0,\\[4pt]
  t_0^{\gamma-1}t_m^{-\gamma}
  \prod_{j=1}^{m-1}t_j^{-1},& k=m .
  \end{cases}
\]

We use the logarithmic substitution \(u_i=-\log t_i\), with
\(\dd t_i=e^{-u_i}\dd u_i\). For an interior minimum, all intermediate powers
cancel and
\[
  T_{m,k}(L)
  =
  \int_0^L e^{(2\gamma-1)U}A_k(U)A_{m-k}(U)\dd U,
\]
where
\[
  A_r(U)
  :=
  \int_{0<u_0<\cdots<u_{r-1}<U}e^{-\gamma u_0}
  \dd u_0\cdots\dd u_{r-1}
  \le
  \frac{U^{r-1}}{\gamma(r-1)!}.
\]
Indeed \(U=u_k\) is the logarithmic age of the oldest skeleton vertex, and
the left and right branches contribute the two \(A\)-factors. If the oldest
vertex is an endpoint, the skeleton is monotone, and direct integration gives
\[
  T_{m,0}(\infty)=T_{m,m}(\infty)=(1-\gamma)^{-m}.
\]

For a backbone of \(n\) transitions, call its skeleton regular at scale \(n\)
if its oldest birth time is at least \(2^{-n}\). Put
\[
  L_n=n\log2,
  \qquad
  T_m^{\mathrm{reg}}(n)=\sum_{k=0}^mT_{m,k}(L_n).
\]

\begin{proposition}\label{prop:time-bound}
For each \(\gamma\in(0,1)\), there are finite constants
\(C_\gamma,D_\gamma\) such that
\[
  T_m^{\mathrm{reg}}(n)\le C_\gamma D_\gamma^n,
  \qquad 1\le m\le n.
\]
\end{proposition}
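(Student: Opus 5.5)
We bound the interior and endpoint terms separately and then sum over \(k\). For \(k\in\{0,m\}\) we have \(T_{m,k}(L_n)\le T_{m,k}(\infty)=(1-\gamma)^{-m}\le(1-\gamma)^{-n}\), since \(m\le n\). These two terms therefore contribute at most \(2(1-\gamma)^{-n}\).

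For an interior minimum \(1\le k\le m-1\), we insert the bound on \(A_r\) into the logarithmic representation:
\[
  T_{m,k}(L_n)
  \le
  \frac{1}{\gamma^2(k-1)!\,(m-k-1)!}
  \int_0^{L_n}e^{(2\gamma-1)U}U^{m-2}\dd U .
\]
We then split according to the sign of \(2\gamma-1\).

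If \(\gamma<1/2\), the exponential decays, and
\(\int_0^\infty e^{-(1-2\gamma)U}U^{m-2}\dd U=(m-2)!\,(1-2\gamma)^{-(m-1)}\).

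If \(\gamma\ge1/2\), we use the truncation and bound \(e^{(2\gamma-1)U}\le e^{(2\gamma-1)L_n}=2^{(2\gamma-1)n}\). This leaves \(\int_0^{L_n}U^{m-2}\dd U=L_n^{m-1}/(m-1)\).

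Summing over \(k\), we use the binomial identity
\(\sum_{k=1}^{m-1}\frac{(m-2)!}{(k-1)!(m-k-1)!}=2^{m-2}\).

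In the case \(\gamma<1/2\) this gives
\[
  \sum_{k=1}^{m-1}T_{m,k}\le\gamma^{-2}\,2^{m}(1-2\gamma)^{-m},
\]
which is of the form \(C D^n\) because \(m\le n\).

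In the case \(\gamma\ge1/2\) the sum is at most
\[
  \gamma^{-2}\,2^{(2\gamma-1)n}\,\frac{(2L_n)^{m-1}}{(m-1)!}.
\]
The factor \((2L_n)^{m-1}/(m-1)!\) is at most \(e^{2L_n}=4^n\), by the exponential series. So this case is also bounded by \(\gamma^{-2}(2^{2\gamma+1})^n\).

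Taking \(D_\gamma\) to be the maximum of the bases appearing above, namely \((1-\gamma)^{-1}\), \(2/(1-2\gamma)\) when \(\gamma<1/2\), and \(2^{2\gamma+1}\), and collecting the prefactors into \(C_\gamma\), gives \(T_m^{\mathrm{reg}}(n)\le C_\gamma D_\gamma^n\).

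The main obstacle is the case \(\gamma\ge1/2\). There the integral over the oldest logarithmic age diverges without truncation, and regularity at scale \(n\) enters precisely through \(L_n=n\log2\). The key step is to absorb both the growing weight \(e^{(2\gamma-1)U}\) and the polynomial volume factor \(U^{m-2}/((k-1)!(m-k-1)!)\) into a geometric factor in \(n\) rather than in \(m\). This uses that \(L_n\) is linear in \(n\), and that \((2L_n)^{m-1}/(m-1)!\le e^{2L_n}\) holds uniformly in \(m\).

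A side point is the boundary case \(\gamma=1/2\). There the weight \(e^{(2\gamma-1)U}\) is constant, so the same bound applies without change.
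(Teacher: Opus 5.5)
Your proof is correct and follows the paper's argument. You bound the endpoint terms by their untruncated values $(1-\gamma)^{-m}$, and the interior terms through the bound on $A_r$, the binomial identity over $k$, and an exponential-series estimate that uses $L_n=n\log 2$ being linear in $n$. The only difference is your case split at $\gamma=1/2$, which the paper avoids by using $U^{m-2}/(m-2)!\le e^U$ and integrating $e^{2\gamma U}$ over $[0,L_n]$ uniformly in $\gamma$; your split is harmless, and it shows in passing that the truncation is not needed when $\gamma<1/2$.
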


\begin{proof}
Split the sum according to whether the oldest vertex is an endpoint or an
interior vertex of the skeleton.

For the two endpoints the skeleton is monotone, so
\(\mathcal D_{m,k}(L_n)\subseteq\mathcal D_{m,k}(\infty)\) and the values
computed above give
\[
  T_{m,0}(L_n)+T_{m,m}(L_n)
  \le
  T_{m,0}(\infty)+T_{m,m}(\infty)
  =
  2(1-\gamma)^{-m}
  \le
  2(1-\gamma)^{-n},
\]
since \(m\le n\) and \((1-\gamma)^{-1}>1\).

When \(m=1\), there are no interior minima. For \(m\ge2\) and an interior
minimum \(1\le k\le m-1\), insert
\[
  A_r(U)\le\frac{U^{r-1}}{\gamma(r-1)!}
\]
into the representation of \(T_{m,k}\), noting that
\((k-1)+(m-k-1)=m-2\):
\[
  T_{m,k}(L_n)
  \le
  \frac{1}{\gamma^2(k-1)!\,(m-k-1)!}
  \int_0^{L_n}e^{(2\gamma-1)U}U^{m-2}\dd U .
\]
Summing over \(k\) and using
\[
  \sum_{k=1}^{m-1}\frac{1}{(k-1)!(m-k-1)!}
  =
  \frac{2^{m-2}}{(m-2)!},
\]
we obtain
\[
  \sum_{k=1}^{m-1}T_{m,k}(L_n)
  \le
  \frac{2^{m-2}}{\gamma^2(m-2)!}
  \int_0^{L_n}e^{(2\gamma-1)U}U^{m-2}\dd U .
\]
Now use \(U^{m-2}/(m-2)!\le e^U\) and \(L_n=n\log2\). Then
\[
  \sum_{k=1}^{m-1}T_{m,k}(L_n)
  \le
  \frac{2^{m-2}}{\gamma^2}
  \int_0^{L_n}e^{2\gamma U}\dd U
  \le
  \frac{2^{m-2}}{2\gamma^3}2^{2\gamma n}
  \le
  \frac{1}{8\gamma^3}\left(2^{1+2\gamma}\right)^n .
\]
For \(m=1\), the interior sum is empty and satisfies the same bound.
Adding the endpoint contribution gives
\[
  T_m^{\mathrm{reg}}(n)
  \le
  \frac{1}{8\gamma^3}\left(2^{1+2\gamma}\right)^n
  +
  2(1-\gamma)^{-n}
  \le
  C_\gamma D_\gamma^n
\]
after choosing, for example,
\[
  D_\gamma=\max\{2^{1+2\gamma},(1-\gamma)^{-1}\},
  \qquad
  C_\gamma=\frac{1}{8\gamma^3}+2 .
\]
\end{proof}

\begin{proposition}\label{prop:no-backbone}
There is \(p_*>0\) such that, for every \(0<p<p_*\), the retained root Palm
law assigns probability zero to the event that the root belongs to an
infinite component of \(\pwhG^\infty\).
\end{proposition}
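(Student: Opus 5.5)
The plan is a truncated first-moment argument over the finite segments of the backbone supplied by \Cref{prop:backbone}, split according to where the oldest backbone vertex sits. Work under the retained root Palm law with root \(\mathbf y_0=(0,U)\). If the root lies in an infinite component of \(\pwhG^\infty\), \Cref{prop:backbone} gives an infinite retained backbone \(\mathbf y_0,\mathbf y_1,\ldots\) whose finite segments satisfy (i)--(iv). Fix integers \(N\le n\) and look at the segment \(\mathbf y_0,\ldots,\mathbf y_n\). Let \(k\) be the index of its oldest vertex. I would distinguish three cases.
\begin{enumerate}[label=(\alph*)]
\item The oldest birth time is at least \(2^{-n}\). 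Then the segment of length \(n\) is regular at scale \(n\).
\item The oldest birth time is below \(2^{-n}\) and \(k\ge N\). Then the segment \(\mathbf y_0,\ldots,\mathbf y_k\) has length \(k\ge N\), its oldest vertex is its endpoint, and therefore its skeleton is monotone with the oldest vertex at the end.
\item The oldest birth time is below \(2^{-n}\) and \(k<N\). Then the \(\whG\)-ball of radius \(2N\) around the root contains a vertex older than \(2^{-n}\).
\end{enumerate}
Here the radius \(2N\) accounts for inserted children; any backbone vertex \(\mathbf y_k\) is within graph distance \(k\) anyway, since the backbone is a subsequence of a geodesic. Consequently
\[
  \PP\{\text{root in infinite component}\}\le P_{\mathrm a}(n)+\sum_{j\ge N}P_{\mathrm b}(j)+P_{\mathrm c}(N,n),
\]
where each term is the probability of the corresponding case, and in (b) the case for length \(j\).

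For (a) and (b), I would bound the probability that a backbone segment of the given length exists by its expected number. Apply \Cref{cor:reduced-backbone}: for a skeleton with \(m\) transitions and block lengths \(r_1+\cdots+r_m=n\), the density after integrating inserted children and nonskeleton vertices is at most \((B_dp)^n\prod_j K(\mathbf s_{j-1},\mathbf s_j)\). Next integrate the skeleton positions with \Cref{lem:spatial-skeleton}, with \(x_0=0\) at the root. This yields \(J_{d,\eta}^m\prod_j g(t_{j-1},t_j)^{-1}\). Finally integrate the birth times, including \(t_0=U\), whose density is \(1\) on \((0,1]\).
\begin{itemize}
\item In case (a), the integral is at most \(T_m^{\mathrm{reg}}(n)\le C_\gamma D_\gamma^n\) by \Cref{prop:time-bound}.
\item In case (b), only the monotone term \(T_{m,m}(\infty)=(1-\gamma)^{-m}\) occurs, and no truncation is needed.
\end{itemize}
Summing over \(1\le m\le n\) and over the at most \(\binom{n-1}{m-1}\le2^n\) compositions gives
\[
  P_{\mathrm a}(n)\le C_\gamma\bigl(2B_dJ'pD_\gamma\bigr)^n,
  \qquad
  P_{\mathrm b}(j)\le\bigl(2B_dJ'p(1-\gamma)^{-1}\bigr)^j,
  \qquad
  J'=\max\{J_{d,\eta},1\}.
\]
Choose \(p_*\) so that both bases are at most \(1/2\). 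Then \(P_{\mathrm a}(n)\to0\) as \(n\to\infty\), and \(\sum_{j\ge N}P_{\mathrm b}(j)\le2^{1-N}\).

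For (c), fix \(N\) and let \(n\to\infty\). By local finiteness of \(\pwhG^\infty\), which uses finite actual degrees and \Cref{lem:short-local-finite}, the ball of radius \(2N\) around the root is almost surely a finite set of vertices with positive birth times. Hence the probability that it contains a vertex older than \(2^{-n}\) tends to zero. Letting first \(n\to\infty\) and then \(N\to\infty\) shows that the probability is zero for every \(p<p_*\).

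I expect the main obstacle to be the bookkeeping in (a) and (b) when applying \Cref{cor:reduced-backbone}. The bound there is stated for a prescribed skeleton, and it must be legitimate to sum the Palm densities over all nonskeleton data at once. Two points need care. First, the time-ordering constraint that defines \(\mathcal D_{m,k}\) must be compatible with the skeleton produced by deleting local maxima, so that the oldest skeleton vertex equals the oldest backbone vertex. This holds because deleted vertices are local maxima and hence never the oldest. Second, in case (b) the truncation at index \(k\) still yields an admissible finite backbone segment. I would check both points by applying the multivariate Mecke formula to ordered tuples of distinct Poisson points, exactly as in the skeleton counting of \cite{GracarLuechtrathMorters2021}.
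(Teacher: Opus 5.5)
Your proof is correct, and its analytic core is the paper's. You use the same two first-moment families with geometric decay for small $p$: witnesses of length $n$ whose skeleton is regular at scale $n$, bounded via \Cref{cor:reduced-backbone}, \Cref{lem:spatial-skeleton} and \Cref{prop:time-bound}, and witnesses whose endpoint is the oldest vertex, bounded by the untruncated monotone term $(1-\gamma)^{-m}$.

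The difference is in the closing step. The paper makes both series summable, applies the first Borel--Cantelli lemma, and then runs a deterministic dichotomy along one fixed infinite backbone. Either the running minimum $a_n=\min_{i\le n}t_i$ is at least $2^{-n}$ infinitely often, or the record index $j(n)$ is unbounded and the prefixes ending at the records are endpoint-oldest witnesses. Unboundedness holds because a fixed positive birth time cannot stay below $2^{-n}$.

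You instead bound the probability at fixed $N\le n$ by a union over three cases. Your case (c) is the quantitative substitute for the unboundedness of $j(n)$. Since it must be stated independently of the non-unique backbone, you control it through the minimal birth time in a finite $\whG$-ball of radius $2N$. This gives the explicit bound $P_{\mathrm a}(n)+2^{1-N}+P_{\mathrm c}(N,n)$, and case (a) needs only $P_{\mathrm a}(n)\to0$ rather than summability. The price is that you need local finiteness at this stage. The paper's pathwise route needs only positivity of birth times once the Borel--Cantelli null set is discarded.

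There are two small slips; neither affects the conclusion.
\begin{enumerate}
\item Being a subsequence of a geodesic does not put $\mathbf y_k$ within graph distance $k$ of the root. Its distance along the ray is $\psi(k)\ge k$. What you actually use, and what property (i) of \Cref{prop:backbone} gives, is $\whG$-distance at most $2k$.
\item Inserted common children need not be retained, so the local finiteness you invoke should be that of $\whG^\infty$, not $\pwhG^\infty$. It follows from finite actual degrees together with \Cref{lem:short-local-finite}.
\end{enumerate}
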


\begin{proof}
We call a sequence of retained vertices \(\mathbf y_0,\ldots,\mathbf y_n\),
with \(\mathbf y_0\) the retained Palm root, a \emph{separated backbone
witness} of length \(n\) if it has properties (i)--(iii) of
\Cref{prop:backbone}. Let \(N_n^{\mathrm{reg}}\) be the number of separated
backbone witnesses of length \(n\) whose skeleton is regular at
scale \(n\). For fixed skeleton length \(m\), combine the composition count
\(\binom{n-1}{m-1}\), \Cref{cor:reduced-backbone},
\Cref{lem:spatial-skeleton}, and \Cref{prop:time-bound}:
\[
\begin{aligned}
\EE[N_n^{\mathrm{reg}}]
&\le
(B_dp)^n
\sum_{m=1}^n \binom{n-1}{m-1}
J_{d,\eta}^mT_m^{\mathrm{reg}}(n)\\
&\le
C_\gamma
(B_dpD_\gamma)^n
J_{d,\eta}(1+J_{d,\eta})^{n-1}\\
&\le
C_\gamma\big[B_dpD_\gamma(1+J_{d,\eta})\big]^n .
\end{aligned}
\]

Let \(N_n^{\mathrm{end}}\) be the number of separated backbone witnesses of
length \(n\) whose endpoint is the oldest backbone vertex and whose
endpoint birth time is smaller than \(2^{-n}\). Here the skeleton is monotone,
so the endpoint estimate gives
\[
\begin{aligned}
  \EE[N_n^{\mathrm{end}}]
  \le
  (B_dp)^n
  \sum_{m=1}^n\binom{n-1}{m-1}
  \left(\frac{J_{d,\eta}}{1-\gamma}\right)^m
  \le
  \left[
    B_dp\left(1+\frac{J_{d,\eta}}{1-\gamma}\right)
  \right]^n .
\end{aligned}
\]
This estimate drops the constraint that the endpoint birth time is smaller
than \(2^{-n}\), which only increases the expectation. The constraint is used
only in the dichotomy below.
Choose
\[
  p_*=
  \frac{1}{2B_d\max\left\{
  D_\gamma(1+J_{d,\eta}),
  1+J_{d,\eta}/(1-\gamma)
  \right\}} .
\]
Then
\[
  \sum_{n\ge1}
  \PP\{N_n^{\mathrm{reg}}>0\}
  +
  \sum_{n\ge1}
  \PP\{N_n^{\mathrm{end}}>0\}
  <\infty .
\]

An infinite backbone has the following deterministic dichotomy. Write
\(\mathbf y_i=(y_i,t_i)\) and
\[
  a_n:=\min_{0\le i\le n}t_i .
\]
If \(a_n\ge2^{-n}\) for infinitely many \(n\), then the regular events
\(\{N_n^{\mathrm{reg}}>0\}\) occur infinitely often. Indeed, the oldest
vertex of the length-\(n\) prefix is not deleted in the skeletonisation,
because the global birth-time minimum is never a local birth-time maximum.
The skeleton of the prefix therefore has oldest birth time \(a_n\ge2^{-n}\)
and is regular at scale \(n\).
Otherwise
\(a_n<2^{-n}\) for all large \(n\).
Let \(j(n)\) be the index attaining the minimum in the first \(n\) steps.
The sequence \(j(n)\) is nondecreasing, since it changes only when a
strictly older vertex appears, and it is unbounded, since no fixed positive
birth time can be smaller than \(2^{-n}\) for arbitrarily large
\(n\). Along
a subsequence on which \(j(n)\) strictly increases, the prefix ending at
\(j(n)\) has its endpoint as oldest vertex and
\[
  t_{j(n)}=a_n<2^{-n}\le2^{-j(n)}.
\]
Thus the endpoint-old events
\(\{N_{j(n)}^{\mathrm{end}}>0\}\) occur infinitely often. The first
Borel--Cantelli lemma excludes both alternatives. By \Cref{prop:backbone}, an
infinite component of \(\pwhG^\infty\) would produce such a backbone.
\end{proof}

\begin{corollary}\label{cor:infinite-nonperc}
For every \(0<p<p_*\), the augmented graph \(\pwhG^\infty\) has no infinite
component almost surely. Consequently \(\Gp^\infty\) has no infinite component
almost surely.
\end{corollary}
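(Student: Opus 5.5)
The corollary should follow from \Cref{prop:no-backbone} by combining the Palm statement with Mecke's formula, and then using a monotonicity comparison between \(\Gp^\infty\) and \(\pwhG^\infty\).

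First, fix \(0<p<p_*\). We need to pass from the root Palm law to the stationary law. Let \(E\) be the event that some retained Poisson vertex in the unit-volume region \([0,1)^d\times(0,1]\) lies in an infinite component of \(\pwhG^\infty\). By Mecke's formula, the expected number of such vertices is
\[
  \int_{[0,1)^d}\int_0^1 p\,\PP_{(x,s)}\{(x,s)\text{ retained and in an infinite component}\}\dd s\dd x .
\]
Here \(\PP_{(x,s)}\) denotes the Palm law with an added vertex at \((x,s)\). By translation invariance of the construction (Poisson process, marks, the edge rule \eqref{eq:edge-rule} and the short-edge rule \(q\le1\) are all invariant under spatial shifts), the inner probability does not depend on \(x\). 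Integrating over \(s\) with the uniform birth time \(U\) gives exactly the retained root Palm probability. \Cref{prop:no-backbone} shows this probability is zero, so the integrand vanishes for almost every \(s\) and the expected count is zero.

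Next, cover \(\R^d\) by countably many integer translates of \([0,1)^d\). A union bound then shows that almost surely no retained vertex lies in an infinite component. Hence \(\pwhG^\infty\) has no infinite component almost surely.

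Finally, the second statement follows from edge inclusion. By construction, \(E(G^\infty)\subseteq E(\whG)\), and the added short edges do not change any degree process, so \(G^\infty\) itself is unchanged. Retention acts on the same vertex set in both graphs. Therefore \(\Gp^\infty\) is a subgraph of \(\pwhG^\infty\) on the same retained vertices, and every component of \(\Gp^\infty\) is contained in a component of \(\pwhG^\infty\). An infinite component of \(\Gp^\infty\) would therefore produce an infinite component of \(\pwhG^\infty\), which we have excluded.

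The only delicate point is the Palm-to-stationary step. One must check that the Palm law at a deterministic point \((x,s)\) coincides with the construction used in \Cref{prop:no-backbone}: an added point together with independent marks. This holds because marks are attached to pairs independently of the point process. One must also use that the measurable event "in an infinite component" is a function of the marked configuration, so that Mecke's formula applies. This is routine, and I expect no genuine obstacle here.
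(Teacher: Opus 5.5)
Your proposal is correct and follows the paper's argument essentially verbatim: Mecke's (Campbell's) formula on the unit cube reduces the expected number of retained vertices in infinite components to the retained root Palm probability of \Cref{prop:no-backbone}, which is zero. Stationarity and countably many integer translates then give almost sure absence of an infinite component in \(\pwhG^\infty\), and the inclusion \(\Gp^\infty\subset\pwhG^\infty\) gives the second claim. One cosmetic slip: your Mecke integrand both carries the factor \(p\) and requires the vertex to be retained, so retention is counted twice. This is harmless, since the quantity vanishes.
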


\begin{proof}
\Cref{prop:no-backbone} says that the retained Palm root has probability zero
of lying in an infinite component of \(\pwhG^\infty\). By Campbell's formula,
for the unit cube \(Q=[0,1]^d\), if \(N_\infty(Q)\) denotes the number of
retained vertices in \(Q\times(0,1]\) lying in infinite components, then
\[
  \EE N_\infty(Q)
  =
  p\int_0^1
  \PP^p_{(0,u)}
  \{(0,u)\leftrightarrow\infty\text{ in }\pwhG^\infty\}\dd u
  =0.
\]
Here \(\PP^p_{(0,u)}\) is the Palm law with \((0,u)\) present and retained.
Stationarity and countability of integer translates imply that
\(\pwhG^\infty\) has no infinite component almost surely. Since
\(\Gp^\infty\subset\pwhG^\infty\), the original percolated graph also has no
infinite component.
\end{proof}

\section{Asymptotically linear attachment rules}\label{sec:general}

We now return to the general attachment rule of Theorem A. Choose
\[
  \gamma<\gamma_+<1,
  \qquad
  \delta\left(\frac1{\gamma_+}-1\right)>1,
\]
and then
\[
  \max\left\{1,\frac{\gamma_+}{1-\gamma_+}\right\}<\eta<\delta.
\]
Set \(\kappa_+:=\eta(1-\gamma_+)-\gamma_+>0\).
Since \(f(k)/k\to\gamma\), there is \(\beta_+<\infty\) such that
\[
  f(k)\le f_+(k):=\beta_++\gamma_+k,\qquad k\ge0.
\]

Since \(f_+(0)\ge f(0)>0\), the rule \(f_+\) is affine with slope
\(\gamma_+\) and \(a_+:=\beta_+/\gamma_+>0\). By the choice of \(\gamma_+\)
and \(\eta\), \Cref{sec:spatial,sec:dynamic,sec:backbone,sec:skeleton} apply to
\(f_+\), with \(\gamma_+\), \(a_+\) and \(\kappa_+\) in place of \(\gamma\),
\(a\) and \(\kappa\), and with \(q\) and \(K\) defined by \eqref{eq:g-q-K}
with \(\gamma_+\) in place of \(\gamma\).

Let \(G_+^\infty\) be the graph built by the rule \eqref{eq:edge-rule}, with
\(f_+\) in place of \(f\), from the same Poisson process \(\cX\) and the same
marks as \(G^\infty\). We show that \(G^\infty\) is a subgraph of
\(G_+^\infty\), using the argument of \Cref{lem:comparison}(ii).

\begin{lemma}\label{lem:monotone-coupling}
Almost surely, \(G^\infty\) is a subgraph of \(G_+^\infty\).
\end{lemma}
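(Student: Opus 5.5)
The plan is to repeat the minimal-counterexample argument of \Cref{lem:comparison}(ii), now comparing two attachment rules instead of two vertex sets. Both graphs are built from the same Poisson process \(\cX\) and the same marks \(V_{\mathbf x,\mathbf y}\). It therefore suffices to show that, for every fixed vertex \(\mathbf x=(x,s)\in\cX\), every child of \(\mathbf x\) in \(G^\infty\) is a child of \(\mathbf x\) in \(G_+^\infty\). Every edge of either graph joins an older vertex to a younger one and is decided only by the older vertex's indegree, so this gives \(E(G^\infty)\subseteq E(G_+^\infty)\).

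First I record local finiteness. Since \(f\le f_+\) and both are affine-dominated, \Cref{lem:degree-moments} applied to \(f_+\) (with \(M=0\)), together with the analogous bound for \(f\), shows that each vertex has almost surely finitely many children in both graphs on every interval \((s,T]\) with \(T\le1\). To make this simultaneous over all \(\mathbf x\in\cX\), I apply Mecke's formula and take a countable union over unit cubes, exactly as in \Cref{lem:short-local-finite}.

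Now fix \(\mathbf x\) and suppose the claim fails. The set of children of \(\mathbf x\) in \(G^\infty\) that are not children in \(G_+^\infty\) is finite and nonempty, so it has an oldest element \(\mathbf y=(y,t)\). Every child of \(\mathbf x\) in \(G^\infty\) born before \(t\) is then a child in \(G_+^\infty\), which gives \(Z_{\mathbf x}(t-)\le Z^+_{\mathbf x}(t-)\). Because \(f\) is nondecreasing and \(f\le f_+\),
\[
  f\bigl(Z_{\mathbf x}(t-)\bigr)\le f\bigl(Z^+_{\mathbf x}(t-)\bigr)\le f_+\bigl(Z^+_{\mathbf x}(t-)\bigr).
\]
Since \(\varphi\) is nonincreasing, the threshold in \eqref{eq:edge-rule} for \(G_+^\infty\) is then at least the threshold for \(G^\infty\). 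The same mark \(V_{\mathbf x,\mathbf y}\) therefore produces the edge in \(G_+^\infty\), contradicting the choice of \(\mathbf y\).

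The only real obstacle is justifying that the minimal counterexample exists. This needs almost-sure finiteness of the children of \(\mathbf x\) in \(G^\infty\), which the local-finiteness step above provides, so I expect no genuine difficulty beyond citing that finiteness.
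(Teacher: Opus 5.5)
Your proposal is correct and follows essentially the same route as the paper: almost-sure finiteness of each vertex's children in \(G^\infty\), obtained by dominating the indegree process by the affine rate \(\gamma_+(k+a_+)\) and applying Mecke's formula, followed by the oldest-counterexample comparison with common marks and the monotonicity of \(\varphi\) (the paper phrases this as an induction over the children in birth order). The only cosmetic difference is that you write \(f(Z_{\mathbf x}(t-))\le f(Z^+_{\mathbf x}(t-))\le f_+(Z^+_{\mathbf x}(t-))\), using that \(f\) is nondecreasing, whereas the paper applies \(f\le f_+\) first and then uses that \(f_+\) is nondecreasing; both orderings are valid.
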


\begin{proof}
The two graphs have the same vertex set. Under the Palm law of a vertex
\(\mathbf x=(x,s)\), as at the beginning of \Cref{sec:dynamic}, the indegree
process of \(\mathbf x\) in \(G^\infty\) is, in logarithmic time
\(\log(v/s)\), a pure-birth process whose jump rate in state \(k\) is
\(f(k)\le\gamma_+(k+a_+)\). A pure-birth process with linearly bounded jump
rates does not explode, so \(\mathbf x\) has finitely many children in
\(G^\infty\) almost surely. By Mecke's formula, almost surely every vertex
has finitely many children in \(G^\infty\).

Fix a vertex \(\mathbf x\), and enumerate its finitely many children in
\(G^\infty\) as \(\mathbf y_1,\ldots,\mathbf y_m\), in increasing order of
birth time \(t_1<\cdots<t_m\). Write \(Z_{\mathbf x}\) and
\(Z_{\mathbf x}^+\) for its indegree processes in \(G^\infty\) and
\(G_+^\infty\), respectively. We show inductively that each \(\mathbf y_j\)
is also a child of \(\mathbf x\) in \(G_+^\infty\). If this holds for the
first \(j-1\) children, then
\[
  Z_{\mathbf x}(t_j-)=j-1\le Z_{\mathbf x}^+(t_j-).
\]
Since \(f\le f_+\) and \(f_+\) is nondecreasing,
\[
  f\big(Z_{\mathbf x}(t_j-)\big)
  \le
  f_+\big(Z_{\mathbf x}(t_j-)\big)
  \le
  f_+\big(Z_{\mathbf x}^+(t_j-)\big).
\]
Since \(\varphi\) is nonincreasing, the right side of \eqref{eq:edge-rule} for
the pair \(\mathbf x,\mathbf y_j\) is at least as large in \(G_+^\infty\) as in
\(G^\infty\). Both graphs use the same mark \(V_{\mathbf x,\mathbf y_j}\),
so \(\mathbf y_j\) is a child of \(\mathbf x\) in \(G_+^\infty\) as well.
Since \(\mathbf x\) was arbitrary, every edge of \(G^\infty\), which joins a
vertex to one of its children, is an edge of \(G_+^\infty\).

\end{proof}

We retain the same vertices in both graphs, and write \(\Gp_+^\infty\) for
the graph induced by the retained vertices in \(G_+^\infty\). By
\Cref{cor:infinite-nonperc}, applied to the affine rule \(f_+\), there is
\(p_*>0\) such that, for every \(0<p<p_*\), the graph \(\Gp_+^\infty\) has no
infinite component almost surely. By \Cref{lem:monotone-coupling},
\(\Gp^\infty\) is a subgraph of \(\Gp_+^\infty\). Hence, for every
\(0<p<p_*\),
\begin{equation}\label{eq:limit-nonperc}
  \Gp^\infty
  \quad\text{has no infinite component almost surely.}
\end{equation}

\section{From the limit to finite graphs}\label{sec:finite}

The local convergence of \(G^t\) to \(G^\infty\) is part of the
Jacob--M\"orters construction of spatial preferential attachment; see
\cite[Section~4]{JacobMorters2017} and \cite{JacobMorters2015}. The upper
bound needed here is the case \(\zeta=0\) of the upper bound on the giant in
\cite[Corollary~2.27]{VanDerHofstad2024}. Local convergence in probability
bounds the proportion of vertices in the largest component by the survival
probability of the local limit; see also
\cite[Corollary~2.1]{VanDerHofstad2021}.
Since Bernoulli percolation acts by independent thinning of the vertices,
local convergence in probability of \(G^t\) transfers directly to
\(\Gp^t\), with the retained Palm version of \(\Gp^\infty\) as the
limit.
For the reader's convenience, we give the Poissonised version of this
statement with a proof.

\begin{lemma}\label{lem:vdh-transfer}
Suppose \(\Gp^t\) converges locally in probability to the retained Palm
version of \(\Gp^\infty\), and suppose the latter has no infinite component
almost surely. Then
\[
  |C_1(\Gp^t)|/t\longrightarrow0
  \qquad\text{in probability.}
\]
\end{lemma}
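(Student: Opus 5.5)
The plan is the standard local-convergence argument: bound the fraction of vertices in large components by the fraction of vertices whose \(k\)-neighbourhood is large, then pass to the limit. Fix \(\varepsilon>0\) and an integer \(k\ge1\). For a finite graph \(H\) on the torus of volume \(t\), let
\[
  N_{\ge k}(H):=\#\{\mathbf x\in H:|C(\mathbf x)|\ge k\}.
\]
Here \(C(\mathbf x)\) is the component of \(\mathbf x\) in \(H\). Whenever \(|C_1(H)|\ge k\), every vertex of \(C_1(H)\) is counted, so
\[
  |C_1(H)|\le \max\{k,\ N_{\ge k}(H)\}.
\]
It therefore suffices to show that \(\limsup_t N_{\ge k}(\Gp^t)/t\) is small in probability once \(k\) is large. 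The event \(|C(\mathbf x)|\ge k\) is determined by the ball of graph radius \(k\) around \(\mathbf x\): a component with at least \(k\) vertices contains at least \(k\) vertices within graph distance \(k-1\) of \(\mathbf x\), and conversely. Hence \(\1\{|C(\mathbf x)|\ge k\}\) is a bounded function of the rooted \(k\)-neighbourhood of \(\mathbf x\). It is moreover continuous in the local topology, being a function of a finite-radius ball up to rooted isomorphism.

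Next I apply the hypothesis. In the Poissonised form, local convergence in probability means the following: for every bounded local function \(h\) of rooted graphs,
\[
  \frac1t\sum_{\mathbf x\in\Gp^t}h(\Gp^t,\mathbf x)\longrightarrow p\,\EE^p_{\mathrm{Palm}}\big[h(\Gp^\infty,\mathbf 0)\big]
\]
in probability. Here \(\EE^p_{\mathrm{Palm}}\) is the retained Palm law with root \((0,U)\). Applying this with \(h=\1\{|C(\cdot)|\ge k\}\) gives
\[
  \frac{N_{\ge k}(\Gp^t)}t\longrightarrow p\,\PP^p_{\mathrm{Palm}}\{|C(\mathbf 0)|\ge k\text{ in }\Gp^\infty\}
\]
in probability. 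The events \(\{|C(\mathbf 0)|\ge k\}\) decrease in \(k\) to \(\{|C(\mathbf 0)|=\infty\}\). By hypothesis this last event has probability zero; under the Palm law this follows from the almost sure statement via Campbell's formula, exactly as in \Cref{cor:infinite-nonperc}. So choose \(k\) with the right-hand side below \(\varepsilon\). Then, with probability tending to one,
\[
  |C_1(\Gp^t)|/t\le k/t+2\varepsilon,
\]
and since \(\varepsilon\) is arbitrary the lemma follows.

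The main obstacle is bookkeeping rather than substance. I must make sure the normalisation of local convergence in the Poissonised setting matches: the number of vertices is Poisson with mean \(t\), so counts must be divided by \(t\), not by the vertex count, and the limit carries the intensity factor \(p\). I must also verify that the component-size indicator is a genuine local function, which needs local finiteness of the limit graph so that the \(k\)-balls are finite. For the transfer of local convergence to the thinned graph, I rely on the statement as given in the hypothesis and do not reprove it.
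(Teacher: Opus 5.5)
Your proposal is correct and is essentially the paper's argument: truncate at component size \(k\), apply local convergence in probability to the bounded functional determined by a finite-radius ball, and use non-percolation of the retained Palm limit to make the remaining mass small. The only difference is cosmetic: you apply convergence to \(\1\{|C(\mathbf x)|\ge k\}\) directly, whereas the paper uses \(\1\{|\cC(\mathbf x)|\le k\}\) and then needs the law of large numbers \(|\Gp^t|/t\to p\) to pass to the complementary count.
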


\begin{proof}
For \(k\ge1\), define
\[
  \xi^{(k)}(\mathbf x,G)
  :=\1\{|\cC_G(\mathbf x)|\le k\}.
\]
This bounded functional is determined by the graph ball of radius \(k\). Local
convergence in probability, in the empirical-root form used in the upper
direction of the almost-local giant theorem of \cite{VanDerHofstad2021}, gives
\[
  \frac1t
  \sum_{\mathbf x\in\Gp^t}
  \xi^{(k)}(\mathbf x,\Gp^t)
  \xrightarrow{\PP}
  p\int_0^1
  \PP^p_{(0,u)}
  \{|\cC_{\Gp^\infty}(0,u)|\le k\}\dd u .
\]
Since the limit graph does not percolate, the right side increases to \(p\)
as \(k\to\infty\). Also \(|\Gp^t|/t\to p\) in probability by the law of
large numbers for the retained Poisson vertex set. Thus, for every
\(\varepsilon>0\), one can choose \(k\) so large that
\[
  \limsup_{t\to\infty}
  \PP\left\{
  \frac1t\#\{\mathbf x\in\Gp^t:
  |\cC_{\Gp^t}(\mathbf x)|>k\}>\varepsilon
  \right\}=0.
\]
If \(|C_1(\Gp^t)|>\lambda t\) and \(t>k/\lambda\), then the set counted
above has size at least \(\lambda t\). Taking \(\varepsilon<\lambda\) proves
the claim.
\end{proof}

\begin{proof}[Proof of Theorem A]
For item (i), we invoke the robust finite-graph theorem of
Jacob--M\"orters~\cite[Proposition~14]{JacobMorters2017}, in the
\(d\)-dimensional convention recalled above. For item (ii), choose the slack
parameters \(\gamma_+\) and \(\eta\) as in \Cref{sec:general}.
\Cref{lem:monotone-coupling} and \Cref{cor:infinite-nonperc}, applied to the
affine rule \(f_+\), give \eqref{eq:limit-nonperc} for every \(0<p<p_*\). Applying
\Cref{lem:vdh-transfer}, with the local convergence provided by
Jacob--M\"orters, gives the finite-graph conclusion.
\end{proof}

\begin{remark}[Critical line]
The proof requires an envelope exponent \(1<\eta<\delta\) with
\(\eta(1/\gamma-1)>1\). On the critical line
\(\delta(1/\gamma-1)=1\), no such exponent exists. At the formal endpoint
\(\eta=\delta\), the time integral in the static two-edge contraction
of \Cref{lem:static-contraction} is logarithmic.
\end{remark}

\section*{Acknowledgements}
We thank Lukas L\"uchtrath for helpful discussions about this work, in
particular for the interpretation of the product exponent in
\Cref{rem:product-exponent}.

\printbibliography

@article{JacobMorters2015,
  author       = {Jacob, Emmanuel and M{\"o}rters, Peter},
  title        = {Spatial preferential attachment networks: power laws and clustering coefficients},
  journaltitle = {The Annals of Applied Probability},
  shortjournal = {Ann. Appl. Probab.},
  volume       = {25},
  number       = {2},
  pages        = {632--662},
  date         = {2015},
  issn         = {1050-5164},
  doi          = {10.1214/14-AAP1006},
  eprint       = {1210.3830},
  eprinttype   = {arxiv},
  eprintclass  = {math.PR},
  url          = {https://zbmath.org/6430707},
  addendum     = {Zbl~1310.05185}
}

@article{JacobMorters2017,
  author       = {Jacob, Emmanuel and M{\"o}rters, Peter},
  title        = {Robustness of scale-free spatial networks},
  journaltitle = {The Annals of Probability},
  shortjournal = {Ann. Probab.},
  volume       = {45},
  number       = {3},
  pages        = {1680--1722},
  date         = {2017},
  issn         = {0091-1798},
  doi          = {10.1214/16-AOP1098},
  eprint       = {1504.00618},
  eprinttype   = {arxiv},
  eprintclass  = {math.PR},
  url          = {https://zbmath.org/6754783},
  addendum     = {Zbl~1367.05194}
}

@article{GracarLuechtrathMorters2021,
  author       = {Gracar, Peter and L{\"u}chtrath, Lukas and M{\"o}rters, Peter},
  title        = {Percolation phase transition in weight-dependent random connection models},
  journaltitle = {Advances in Applied Probability},
  shortjournal = {Adv. Appl. Probab.},
  volume       = {53},
  number       = {4},
  pages        = {1090--1114},
  date         = {2021},
  issn         = {0001-8678},
  doi          = {10.1017/apr.2021.13},
  eprint       = {2003.04040},
  eprinttype   = {arxiv},
  eprintclass  = {math.PR},
  url          = {https://zbmath.org/7435082},
  addendum     = {Zbl~1475.60198}
}

@article{DereichMorters2013,
  author       = {Dereich, Steffen and M{\"o}rters, Peter},
  title        = {Random networks with sublinear preferential attachment: the giant component},
  journaltitle = {The Annals of Probability},
  shortjournal = {Ann. Probab.},
  volume       = {41},
  number       = {1},
  pages        = {329--384},
  date         = {2013},
  issn         = {0091-1798},
  doi          = {10.1214/11-AOP697},
  eprint       = {1007.0899},
  eprinttype   = {arxiv},
  eprintclass  = {math.PR},
  url          = {https://zbmath.org/6145506},
  addendum     = {Zbl~1260.05143}
}

@article{EckhoffMortersOrtgiese2018,
  author       = {Eckhoff, Maren and M{\"o}rters, Peter and Ortgiese, Marcel},
  title        = {Near critical preferential attachment networks have small giant components},
  journaltitle = {Journal of Statistical Physics},
  shortjournal = {J. Stat. Phys.},
  volume       = {173},
  number       = {3--4},
  pages        = {663--703},
  date         = {2018},
  issn         = {0022-4715},
  doi          = {10.1007/s10955-018-2054-5},
  eprint       = {1712.00400},
  eprinttype   = {arxiv},
  eprintclass  = {math.PR},
  url          = {https://zbmath.org/6996593},
  addendum     = {Zbl~1404.05192}
}

@article{MortersSchleicher2026,
  author       = {M{\"o}rters, Peter and Schleicher, Nick},
  title        = {The largest subcritical component in inhomogeneous random graphs of preferential attachment type},
  journaltitle = {Combinatorics, Probability and Computing},
  shortjournal = {Comb. Probab. Comput.},
  volume       = {35},
  number       = {3},
  pages        = {414--432},
  date         = {2026},
  issn         = {0963-5483},
  doi          = {10.1017/S096354832610039X},
  eprint       = {2503.05469},
  eprinttype   = {arxiv},
  eprintclass  = {math.PR},
  url          = {https://zbmath.org/8230664}
}

@article{HirschMoench2020,
  author       = {Hirsch, Christian and M{\"o}nch, Christian},
  title        = {Distances and large deviations in the spatial preferential attachment model},
  journaltitle = {Bernoulli},
  shortjournal = {Bernoulli},
  volume       = {26},
  number       = {2},
  pages        = {927--947},
  date         = {2020},
  issn         = {1350-7265},
  doi          = {10.3150/19-BEJ1121},
  eprint       = {1809.09956},
  eprinttype   = {arxiv},
  eprintclass  = {math.PR},
  url          = {https://zbmath.org/7166552},
  addendum     = {Zbl~1466.60209}
}

@article{GracarGrauerLuechtrathMorters2019,
  author       = {Gracar, Peter and Grauer, Arne and L{\"u}chtrath, Lukas and M{\"o}rters, Peter},
  title        = {The age-dependent random connection model},
  journaltitle = {Queueing Systems},
  shortjournal = {Queueing Syst.},
  volume       = {93},
  number       = {3--4},
  pages        = {309--331},
  date         = {2019},
  issn         = {0257-0130},
  doi          = {10.1007/s11134-019-09625-y},
  eprint       = {1810.03429},
  eprinttype   = {arxiv},
  eprintclass  = {math.PR},
  url          = {https://zbmath.org/7170165},
  addendum     = {Zbl~1432.60090}
}

@article{GracarHeydenreichMoenchMorters2022,
  author       = {Gracar, Peter and Heydenreich, Markus and M{\"o}nch, Christian and M{\"o}rters, Peter},
  title        = {Recurrence versus transience for weight-dependent random connection models},
  journaltitle = {Electronic Journal of Probability},
  shortjournal = {Electron. J. Probab.},
  volume       = {27},
  eid          = {60},
  pages        = {1--31},
  date         = {2022},
  issn         = {1083-6489},
  doi          = {10.1214/22-EJP748},
  eprint       = {1911.04350},
  eprinttype   = {arxiv},
  eprintclass  = {math.PR},
  url          = {https://zbmath.org/7541196},
  addendum     = {Zbl~1491.05168}
}

@article{GracarGrauerMorters2022,
  author       = {Gracar, Peter and Grauer, Arne and M{\"o}rters, Peter},
  title        = {Chemical distance in geometric random graphs with long edges and scale-free degree distribution},
  journaltitle = {Communications in Mathematical Physics},
  shortjournal = {Commun. Math. Phys.},
  volume       = {395},
  number       = {2},
  pages        = {859--906},
  date         = {2022},
  issn         = {0010-3616},
  doi          = {10.1007/s00220-022-04445-3},
  eprint       = {2108.11252},
  eprinttype   = {arxiv},
  eprintclass  = {math.PR},
  url          = {https://zbmath.org/7593345},
  addendum     = {Zbl~1497.05244}
}

@article{Moench2024,
  author       = {M{\"o}nch, Christian},
  title        = {Inhomogeneous long-range percolation in the weak decay regime},
  journaltitle = {Probability Theory and Related Fields},
  shortjournal = {Probab. Theory Relat. Fields},
  volume       = {189},
  number       = {3--4},
  pages        = {1129--1160},
  date         = {2024},
  issn         = {0178-8051},
  doi          = {10.1007/s00440-024-01281-5},
  eprint       = {2303.02027},
  eprinttype   = {arxiv},
  eprintclass  = {math.PR},
  url          = {https://zbmath.org/7891480},
  addendum     = {Zbl~1552.60284}
}

@article{GracarLuechtrathMoench2025,
  author       = {Gracar, Peter and L{\"u}chtrath, Lukas and M{\"o}nch, Christian},
  title        = {Finiteness of the percolation threshold for inhomogeneous long-range models in one dimension},
  journaltitle = {Electronic Journal of Probability},
  shortjournal = {Electron. J. Probab.},
  volume       = {30},
  eid          = {134},
  pages        = {1--29},
  date         = {2025},
  issn         = {1083-6489},
  doi          = {10.1214/25-EJP1399},
  eprint       = {2203.11966},
  eprinttype   = {arxiv},
  eprintclass  = {math.PR},
  url          = {https://zbmath.org/8102212},
  addendum     = {Zbl~1573.05340}
}

@article{JacobJahnelLuechtrath2026,
  author       = {Jacob, Emmanuel and Jahnel, Benedikt and L{\"u}chtrath, Lukas},
  title        = {Subcritical annulus crossing in spatial random graphs},
  journaltitle = {Advances in Applied Probability},
  shortjournal = {Adv. Appl. Probab.},
  pages        = {1--40},
  date         = {2026},
  issn         = {0001-8678},
  note         = {First View},
  doi          = {10.1017/apr.2026.10063},
  eprint       = {2411.10333},
  eprinttype   = {arxiv},
  eprintclass  = {math.PR},
  url          = {https://zbmath.org/902606513}
}

@article{VanDenBerg1996,
  author       = {van den Berg, J.},
  title        = {A note on disjoint-occurrence inequalities for marked Poisson point processes},
  journaltitle = {Journal of Applied Probability},
  shortjournal = {J. Appl. Probab.},
  volume       = {33},
  number       = {2},
  pages        = {420--426},
  date         = {1996},
  issn         = {0021-9002},
  doi          = {10.2307/3215065},
  url          = {https://zbmath.org/926245},
  addendum     = {Zbl~0860.60013}
}

@book{BinghamGoldieTeugels1987,
  author       = {Bingham, N. H. and Goldie, C. M. and Teugels, Jozef L.},
  title        = {Regular variation},
  series       = {Encyclopedia of Mathematics and Its Applications},
  number       = {27},
  publisher    = {Cambridge University Press},
  location     = {Cambridge},
  date         = {1987},
  pagetotal    = {491},
  isbn         = {0-521-30787-2},
  url          = {https://zbmath.org/4000257},
  addendum     = {Zbl~0617.26001}
}

@misc{VanDerHofstad2021,
  author       = {van der Hofstad, Remco},
  title        = {The giant in random graphs is almost local},
  date         = {2025},
  origdate     = {2021},
  version      = {3},
  note         = {Preprint},
  doi          = {10.48550/arXiv.2103.11733},
  eprint       = {2103.11733v3},
  eprinttype   = {arxiv},
  eprintclass  = {math.PR},
  url          = {https://zbmath.org/900561687}
}

@book{VanDerHofstad2024,
  author       = {van der Hofstad, Remco},
  title        = {Random graphs and complex networks. Volume 2},
  series       = {Cambridge Series in Statistical and Probabilistic Mathematics},
  number       = {54},
  publisher    = {Cambridge University Press},
  location     = {Cambridge},
  date         = {2024},
  pagetotal    = {490},
  doi          = {10.1017/9781316795552},
  isbn         = {978-1-107-17400-9},
  url          = {https://zbmath.org/7782966},
  addendum     = {Zbl~1540.05003}
}

\end{document}